\def\l2{_{L_2(\Omega)}}
\def\nl2{_{[L_2(\Omega)]^n}}
\def\E{\mathcal{E}}
\def\Div{{\rm div}\hspace{1pt}}
\newcommand{\va}{{\mathbf a}}
\newcommand{\II}{{I\!I}}
 \def\XXint#1#2#3{{\setbox0=\hbox{$#1{#2#3}{\int}$}
\vcenter{\hbox{$#2#3$}}\kern-.5\wd0}}
\DeclareFontFamily{U}{matha}{\hyphenchar\font45}
\DeclareFontShape{U}{matha}{m}{n}{
      <5> <6> <7> <8> <9> <10> gen * matha
      <10.95> matha10 <12> <14.4> <17.28> <20.74> <24.88> matha12
      }{}
\DeclareSymbolFont{matha}{U}{matha}{m}{n}
\DeclareFontFamily{U}{mathx}{\hyphenchar\font45}
\DeclareFontShape{U}{mathx}{m}{n}{
      <5> <6> <7> <8> <9> <10>
      <10.95> <12> <14.4> <17.28> <20.74> <24.88>
      mathx10
      }{}
\DeclareSymbolFont{mathx}{U}{mathx}{m}{n}
\DeclareMathDelimiter{\vvvert}{0}{matha}{"7E}{mathx}{"17}
\def\T{\mathcal{T}}
\def\BB{{\mathcal B}}
\def\E{\mathcal{E}}
\definecolor{darkred}{rgb}{.7,0,0}
\definecolor{green}{rgb}{0,0.7,0}
\definecolor{myblue}{rgb}{0,0,0.7}
\newcommand\Blue[1]{\textcolor{myblue}{#1}}
\definecolor{pass}{rgb}{0,0,0.5}
\theoremstyle{thmstyletwo}%
\newtheorem{theorem}{Theorem}
\newtheorem{remark}{Remark}%
\newtheorem{lemma}{Lemma}
\newtheorem{corollary}{Corollary}
\numberwithin{equation}{section}
\newcounter{saveeqn}
     \definecolor{red}{rgb}{0.9,0,0}
     \definecolor{green}{rgb}{0,0.6,0}
     \definecolor{rb}{rgb}{0.6,0,0.2}     
     \definecolor{nat}{rgb}{0,0,0.5}
\newcommand{\pt}{\partial}
\newcommand {\eps} {\varepsilon}
\newcommand {\beq} {\begin{equation}}
\newcommand {\eeq} {\end{equation}}
\newcommand {\beqa} {\begin{eqnarray}}
\newcommand {\eeqa} {\end{eqnarray}}
\newcommand {\beqann} {\begin{eqnarray*}}
\newcommand {\eeqann} {\end{eqnarray*}}
\renewcommand{\epsilon}{\varepsilon}
\numberwithin{equation}{section}
\begin{document}

\DOI{DOI HERE}
\copyrightyear{2021}
\vol{00}
\pubyear{2021}
\access{Advance Access Publication Date: Day Month Year}
\appnotes{Paper}
\copyrightstatement{Published by Oxford University Press on behalf of the Institute of Mathematics and its Applications. All rights reserved.}
\firstpage{1}

\title[Pointwise estimators for convection-diffusion problems]{Maximum norm a posteriori error estimates for convection-diffusion problems}

\author{Alan Demlow
\address{\orgdiv{Department of Mathematics}, \orgname{Texas A\&M University}, \orgaddress{\street{College Station}, \postcode{77843-3368}, \state{TX}, \country{USA}}}}
\author{Sebastian Franz
\address{\orgdiv{Institute of Scientific Computing}, \orgname{Technische Universit\"at Dresden}, \orgaddress{
 \state{Dresden}, \country{Germany}}}}
\author{Natalia Kopteva*
\address{\orgdiv{Department of Mathematics and Statistics}, \orgname{University of Limerick}, \orgaddress{
\state{Limerick}, \country{Ireland}}}}


\authormark{A.~DEMLOW, S. FRANZ, AND N.~KOPTEVA}

\corresp[*]{Corresponding author: \href{email:Natalia.Kopteva@ul.ie}{Natalia.Kopteva@ul.ie}}

\received{Date}{0}{Year}
\revised{Date}{0}{Year}
\accepted{Date}{0}{Year}

\renewcommand{\thefootnote}{\fnsymbol{footnote}}

\abstract{
We prove residual-type a posteriori error estimates in the maximum norm for a linear scalar elliptic convection-diffusion problem that may be singularly perturbed.  Similar error analysis in the energy norm by Verf\"{u}rth indicates that a dual norm of the {convective derivative of the} error must be added to the natural energy norm in order for the natural residual estimator to be reliable and efficient.  We show that  the situation is similar for the maximum norm.   In particular, we define a mesh-dependent weighted seminorm of the convective  error which functions as a maximum-norm counterpart to the dual norm used in the energy norm setting.  The total error is then defined as the sum of this seminorm, the maximum norm of the error, and data oscillation.  The natural maximum norm residual error estimator is shown to be equivalent to this total error notion, with constant independent of singular perturbation parameters.  These estimates are proved under the assumption that certain natural estimates hold for the Green's function for the problem at hand.  Numerical experiments confirm that our estimators effectively capture the maximum-norm error behavior for singularly perturbed problems, and can effectively drive adaptive refinement in order to capture layer phenomena.}

\keywords{
a posteriori error estimate,
       maximum norm, singular perturbation,  convection-diffusion
 }



\maketitle

\section{Introduction}

Our goal is to prove residual-type a posteriori error estimates in the maximum norm for singularly perturbed convection-diffusion equations of the form
\begin{align}
\label{eq1-1}
 Lu:=-\eps\Delta u+\Div(\va u)+bu=f\;\;\hbox{in } \Omega, \qquad u=0\;\; \hbox{on } \partial \Omega.
\end{align}
Here  $0<\epsilon \le 1$,
$\Omega$ is a
polyhedral domain in $\mathbb{R}^n$, $n=2, 3$,
and we assume that $\va=(a_1,\ldots,a_n)$, $b$, and $f $ are sufficiently smooth on $\bar\Omega $, and that
$|\va|>0$, {$ b\ge 0$, and
 $b+\frac{1}{2}\Div \va\geq 0$ in $\bar\Omega$.}
 We make additional assumptions on the Green's function of $L$, which agree with the sharp bounds on the Green's function for a particular case of \eqref{eq1-1}, rigorously proved in \cite{FrKop2d, FrKop3d, FrKop_sharp,FKPP}.  We emphasize that our estimates are robust with respect to the singular perturbation parameter $\epsilon$, up to logarithmic terms that typically arise in the context of maximum norm estimates for finite element methods.

A guiding principle of a posteriori error estimation is that estimators should be reliable and efficient ({i.e.} provide a posteriori upper and lower bounds) for the error notion under consideration.  For symmetric elliptic problems, residual-type error estimators of the type we consider here are well known to be reliable and efficient for energy and a number of $L_p$-type norms, up to a data oscillation term which is heuristically of higher order and measures the distance of the right hand side $f$ to a piecewise polynomial space.
By contrast, for convection-diffusion problems {it is} known that standard residual estimators reliably bound the \Blue{error in the energy norm}
{defined by $\vvvert v \vvvert^2 = \epsilon \|\nabla v\|_{L_2(\Omega)}^2+ \|(b+\frac{1}{2}\Div \va)^{1/2}v\|_{L_2(\Omega)}^2$}, but are not efficient.
To be more precise {\cite{Ver_sinum_2005},
the natural energy-norm residual estimator then is reliable and efficient up to data oscillation for the error notion $\vvvert u-u_h \vvvert + \vvvert \va \cdot \nabla(u-u_h) \vvvert_\ast$,
where $\vvvert \varphi \vvvert_\ast: = \sup_{ v \in H_0^1(\Omega) \setminus \{ 0 \}} \frac{(\varphi, v)}{\vvvert v \vvvert}$,
that is for the sum of the energy norm and a dual norm of the {convective derivative of the error (also to be referred to as the convective error).}
  The dual convective norm is weaker and thus may be asymptotically negligible, but can also dominate the error as measured in the energy norm until layers in the solution are sufficiently refined.  This framework is explained in \cite{tob_Ver_2015}, where residual-type a posteriori error estimates are proved for several stabilized finite element methods for equations of the type \eqref{eq1-1} {(see also \cite{Ver_sinum_2005,Ver_book_13})}.  Philosophically we closely follow this work, but while considering the maximum norm rather than an energy norm.

We begin by proving residual-type estimates in the maximum norm for standard Galerkin finite element methods without stabilization.  Such methods are not necessarily practically relevant in the
{convection-dominated regime,}
but will allow us to establish a suitable theoretical framework before adding stabilization terms.
{Next, in order to obtain a reliable and efficient estimator,
we add an
elementwise-weighted measure of the convective derivative of the error to our error notion.}
  This seminorm of the convective error has a similar purpose to the one defined for the energy norm in \cite{tob_Ver_2015}, {and is similarly independent of the stabilization terms}.  However, in contrast to that work, our seminorm is mesh-dependent and is adapted to the particular case of the maximum norm.  A precise definition is given below, but this seminorm behaves similarly to the quantity $\|\min\{1,\,\ell_{h} \eps^{-1}h_T^2\} \,\va\cdot\nabla (u-u_h)\|_{L_\infty(\Omega)}$ {under sufficiently restrictive assumptions}.  Here $u_h$ is the finite element solution, $\ell_h$ is a logarithmic factor \Blue{depending on $\epsilon$ and the minimum mesh diameter}, and $h_T$ is the local mesh size.  This quantity typically dominates the original target error notion $\|u-u_h\|_{L_\infty(\Omega)}$ when $h_T>\epsilon$, but becomes relatively negligible when $h_T \ll\epsilon$.  This error structure is similar to that observed in the energy norm case.
  After considering unstabilized finite element methods we consider the effects of several stabilization schemes on our a posteriori estimates, also following the similar analysis for energy norms outlined in \cite{tob_Ver_2015}.

As in recent works on maximum-norm a posteriori error estimation \cite{DG12, DK15}, we shall rely on the continuous Green's function for the adjoint problem to \eqref{eq1-1} in order to represent the error.  Estimates for the Green's function in various norms are essential to proving sharp a posteriori estimates.  The estimates that we need have been established under relatively restrictive assumptions on $\Omega$ and the streamline direction $\va$ in \cite{FrKop2d}.  Extension to more general cases appears to be technically quite challenging, so we shall prove our results under the assumption that the Green's function behaves as in \cite{FrKop2d} without giving a complete theoretical picture of the situations for which this assumption is valid.

Finally, we are not aware of previous works on pointwise or maximum-norm a posteriori error estimation for finite element methods
for convection-dominated convection-diffusion equations.  Maximum-norm a posteriori estimates for finite difference methods for one-dimensional convection-diffusion scalar problems and systems are contained in \Blue{\cite{Kop01,TL_system_sinum09,TL_2010_IJNAM}.}
Well-known a priori maximum-norm analyses of streamline diffusion finite element methods are given in \cite{JSW87, Ni90}
{for regions away from layers.
There is also
a considerable literature on $\eps$-uniform maximum-norm error bounds for finite difference methods
on a-priori-chosen layer-adapted meshes; see, e.g.,
\cite{NK_EOR}, \cite[\S{}III.2]{RStTob} and references therein.
}
Thus there is longstanding interest in controlling pointwise errors in the presence of layer phenomena.

An outline of the paper is as follows.  In Section \ref{sec:prelims} we give preliminaries, including
the definition of the unstabilized finite element method and discussion of Green's functions.  In Section \ref{sec_conform} we prove a posteriori error estimates that are reliable and efficient for the sum of the maximum norm and an appropriate mesh-dependent seminorm of the error in the convective derivative, up to a data oscillation term.  In Section \ref{sec:stabilization} we consider a few different stabilization schemes and prove that our estimates remain valid for them under suitable assumptions.  Finally, in Section~\ref{sec:numerics} we present numerical experiments that illustrate the behavior and performance of our estimators in the context of uniform and adaptive mesh refinement.

\section{Preliminaries}

\label{sec:prelims}

In this section we discuss the analytical framework for our results and also give some finite element tools.

\subsection{Notation}
We write
 $\alpha\simeq \beta$ when $\alpha \lesssim \beta$ and $\alpha \gtrsim \beta$, and
$\alpha \lesssim \beta$ when $\alpha \le C\beta$ with a generic constant $C$ depending on $\Omega$, \Blue{$\va$, $b$, } and
$f$,
but not 
%
 on  $\eps$ or
 the diameters  of mesh elements.
  For $\mathcal{D}\subset\Omega $, $1 \le p \le \infty$, and $k \ge 0$,
  let $\|\cdot\|_{p\,;\mathcal{D}}=\|\cdot\|_{L_p(\mathcal{D})}$
  and $|\cdot|_{k,p\,;\mathcal{D}}=|\cdot|_{W_p^k(\mathcal{D})}$, where $|\cdot |_{W_p^k(\mathcal{D})}$
  is the standard Sobolev seminorm with integrability index $p$ and smoothness index $k$,
  while
    $\langle\cdot,\cdot\rangle$ is the $L_2(\Omega)$ inner product.

\subsection{Green's functions}

  {
As is standard in the literature on maximum-norm error bounds in FEM, we employ a Green's function in order to represent the error pointwise.
Let $G=G(x,\cdot)$ be the Green's function associated with \eqref{eq1-1}. For each fixed $x\in\Omega$, it satisfies
\beq\label{Green_prob}
      L^*G=-\eps\Delta G-\va\cdot\nabla G+b G=\delta(x-\cdot)\;\;\hbox{in } \Omega, \qquad
                              G=0\;\; \hbox{on } \partial \Omega,
\eeq
    where $\delta(\cdot)$ is the $n$-dimensional Dirac $\delta$-distribution.

Therefore, the unique solution $u$ of \eqref{eq1-1} allows the representation
   \beq\label{eq:sol_prim}
     u(x)=\langle G(x,\cdot),\,f(\cdot)\rangle.
   \eeq
  Similarly, any sufficiently smooth $v$
  allows the representation
\beq\label{v_via_green}
v(x)=\epsilon \langle\nabla v,\, \nabla G(x, \cdot)) + \langle\Div(\va v)+bv,\, G(x, \cdot)\rangle.
\eeq
Setting $v:=u_h$ in \eqref{v_via_green} and then subtracting \eqref{eq:sol_prim}, we immediately arrive at the error representation
\beq\label{er_via_green}
(u_h-u)(x)=\epsilon \langle\nabla u_h,\, \nabla G(x, \cdot)\rangle + \langle\Div(\va u_h)+bu_h-f ,\, G(x, \cdot)\rangle.
\eeq

Similar to \cite{DK15}, we shall rely on a number of bounds on the Green's function in which the dependence on the singular perturbation parameter $\eps$ is shown explicitly.
It is worth noting that \cite{DK15} addressed singularly perturbed equations of reaction-diffusion
type, for which the Green's function in the unbounded domain is
(almost) radially symmetric and exponentially decaying away from the singular point.
By contrast, the Green's function for the convection-diffusion
problem \eqref{eq1-1} exhibits
a much more complex 
{\it anisotropic} structure (see Fig.~\ref{fig:Green}).
%
   \begin{figure}[tb]
      \centerline{
     \includegraphics[width=0.5\textwidth]{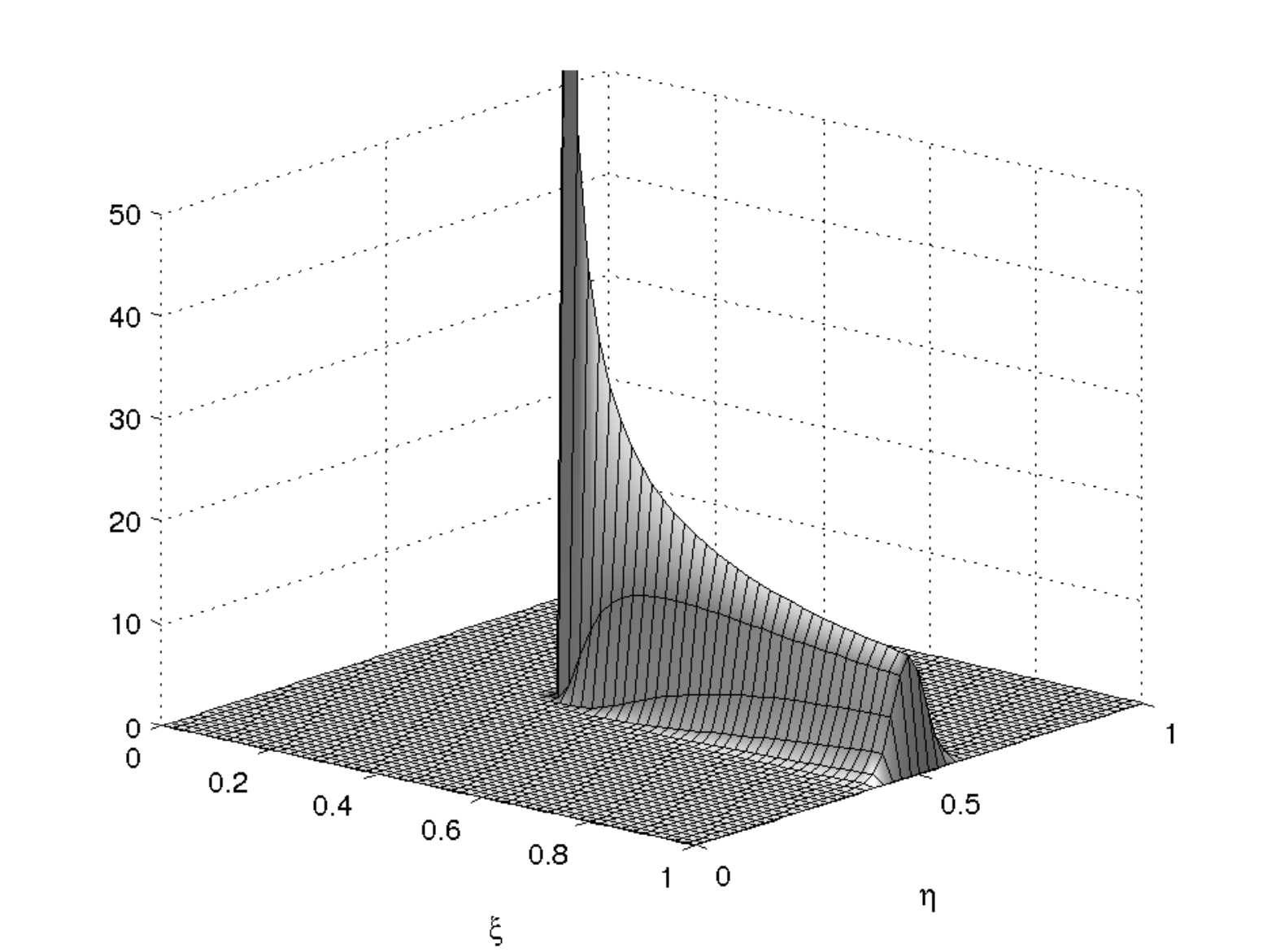}   }
      \caption{Typical anisotropic behaviour of the Green's function \Blue{$G(x,y; \xi, \eta)$} for problem \eqref{eq1-1}
      in $\Omega=(0,1)^2$ with
               $\va=[-1,0]$, $b=0$, $(x,y)=(\frac13,\frac12)$ and $\eps=10^{-3}$.}
             \label{fig:Green}
   \end{figure}

We shall require the following bounds on  the Green's function $G$ from \eqref{Green_prob}:
\begin{subequations}\label{G_bounds}
\begin{align}
\|G(x,\cdot)\|_{1\,;\Omega}+\eps^{1/2}|G(x,\cdot)|_{1,1\,;\Omega} & \lesssim 1,
\label{green_L1}\\
|G(x, \cdot)|_{1,1\,; B(x,\rho)\cap\Omega} &\lesssim \eps^{-1} \rho,
\label{green_locW11}\\
|G(x,\cdot)|_{2,1\,;\Omega \setminus B(x,\rho))} &\lesssim
\eps^{-1}\bigl(\ln (2+\eps/\rho)+ |\ln \eps|\bigr),
\label{green_W12}
\end{align}
\end{subequations}
as well as, occasionally,
\beq
\|\va\cdot \nabla G(x,\cdot)\|_{1\,;\Omega}  \lesssim 1+|\ln \eps|.
\label{green_conv}
\eeq
Here 
$x \in \Omega$
and $\rho>0$ are arbitrary, while $B(x,\rho)$ denotes the ball of radius $\rho$ centered at $x$.
Note that \eqref{green_conv} follows from \eqref{G_bounds}. Indeed,
$\|\va\cdot \nabla G\|_{1\,;B(x,\eps)\cap\Omega}\lesssim 1$ follows from \eqref{green_locW11},
while $\|\va\cdot \nabla G\|_{1\,;\Omega\setminus B(x,\eps)}$ is easily bounded using
\eqref{green_W12} and \eqref{green_L1} combined with the differential equation  from \eqref{Green_prob}.

\Blue{In order to gain additional insight into the scaling in the bounds given in \eqref{G_bounds} and \eqref{green_conv}, note that the fundamental solution on $\mathbb{R}^3$ with $\va=[-a_1,0,0]$ is given by $G_{\mathbb{R}^3} ({\bf x}, {\bf \xi})= \frac{1}{4 \pi \eps} \frac{\exp(\frac{1}{2} a_1(\xi_1-x_1-r)/\eps)}{r}$, where $r=\sqrt{(\xi_1-x_1)^2+(\xi_2-x_2)^2+(\xi_3-x_3)^2}$.  The scalings observed above may be directly computed from this function.  The free-space fundamental solution for convection-dominated problems in two space dimensions may also be written down explicitly and scalings computed from it, but the expression is more complex.  Note also that the bounds \eqref{G_bounds} are isotropic, while a sharper bound for the convective derivative is given in \eqref{green_conv}.  This reflects the anisotropic nature of the Green's function which can be explicitly seen in the above expression for $G_{\mathbb{R}^3}$.   }

Note that although the bounds \eqref{G_bounds} appear as an assumption for our results below, they
were rigourously proved in \cite{FrKop2d, FrKop3d,FKPP} and are also shown to be sharp in \cite{FrKop_sharp})
for a particular case of $|\va|=|a_1|$ in rectangular and cubic domains.  \Blue{We hypothesize that similar results would hold on more complicated domains such as those with reentrant corners, since similar results hold for standard elliptic and singularly perturbed reaction diffusion problems on nonconvex as well as convex domains \cite{DK15}.  However, the proof techniques used for convection-diffusion problems are different than in these other cases.  Extension to more complex domains would be technically challenging, and the form of the results is not completely clear.  We also comment on the assumption $|\va|>0$ made following \eqref{eq1-1}.  This condition is needed in the proofs of \eqref{G_bounds} given in \cite{FrKop2d, FrKop3d, FKPP}.  These proofs are substantially different than those given for scaled Green's function estimates for reaction-diffusion problems in \cite{DK15}, and it is not clear how to bridge the gap between these different techniques in order to approach the case of a convection coefficient $\va$ which sometimes vanishes.  However, the scaling obtained in \eqref{G_bounds} for convection-diffusion problems is very similar to that observed for reaction-diffusion problems, so it seems likely that \eqref{G_bounds} are also valid under  a weaker assumption that $|\va|+b>0$.
}


\subsection{Finite element space} \label{ssec_S_Gh}

Let $\T$ be a shape-regular and conforming simplicial partition of $\Omega$,
with $\mathcal E$  denoting the set of all interior
$(n-1)$-dimensional element faces.
Let the finite element space $S_h \subset H_0^1(\Omega)$
 be the set of functions which are continuous on $\Omega$, equal to $0$ on $\partial \Omega$, and polynomials of degree at most $r$ on each $T \in \T$, where $r \ge 1$ is a fixed polynomial degree.

Similarly to \cite{DG12,DK15}, we shall employ the Scott-Zhang interpolant,
denoted $G_h$, of the Green's function $G(\cdot):=G(x, \cdot)$  from \eqref{Green_prob} (where $x\in\Omega$ remains fixed).
We let $G_h$ lie
 in the space of continuous piecewise-linear functions with respect to $\T$.   Then $G_h \in S_h$ for any $r\ge 1$, and it satisfies the   local stability and approximation property
\begin{align}
|G-G_h|_{k,1\,;T} &\lesssim h_T^{j-k} |G|_{j,1\,;\omega_T}
\label{G_h}
\qquad
\forall\, T \in \T,\quad 0 \le k \le j \le 2,
\end{align}
whenever the {right}-hand side of \eqref{G_h} is defined.  Here
$h_T$ is the diameter of element $T$, while
$\omega_T$ denotes the standard  patch of elements in $\T$ touching $T$ (including $T$).

\section{A posteriori error estimation in the conforming case}\label{sec_conform}

\subsection{Finite element method and error indicators}
Introduce the standard bilinear form associated with \eqref{eq1-1}:
\begin{align}
\label{biform}
\BB(u,v):=
\epsilon \langle\nabla u,\, \nabla v\rangle + \langle\Div(\va u)+bu ,\, v\rangle.
\end{align}
The standard conforming finite element method is then given by:
\begin{align}
\label{conf_fem}
\mbox{Find~}u_h \in S_h :\qquad
\BB(u_h, v_h)= \langle f, v_h\rangle\qquad\forall\,v_h \in S_h.
\end{align}
We emphasize that this basic finite element method is not generally practical in the singularly perturbed case $\epsilon \ll 1$
(as unstabilized finite element solutions typically exhibit non-physical oscillations unless $h_T/\eps$ is sufficiently small).
We first study this unstabilized method mainly in order to understand the structure of the error.  Stabilized schemes which are more practically relevant for singularly perturbed problems are considered below.

We shall use an a posteriori error indicator defined $\forall\,T\in\T$ by
\begin{subequations}\label{ind_def}
\begin{align}
&\eta_\infty(T)  : = \alpha_T \|\eps \Delta u_h - \Div(\va u_h) - bu_h +f\|_{\infty\,;T}
+ \beta_T \|\llbracket \nabla u_h \rrbracket \|_{\infty\,;\pt T {\backslash\pt\Omega} },\\
\label{alpha_beta}
&\alpha_T:=\min\{1,\,\ell_{h} \eps^{-1}h_T^2\},\qquad
\beta_T:=\min\{\eps^{1/2},\,\ell_{h} h_T\}\qquad\mbox{in}\;\; T\in\T,
\end{align}
\end{subequations}
where the definition of the logarithmic factor
$\ell_h := 1+ \ln (2 + {\eps}\underline{h}^{-1})+|\ln \eps|$,
with $\underline{h}:=\min_{T\in\T}h_T$,
is motivated by the logarithmic factors in \eqref{G_bounds}.
Here we also use the standard notation
$\llbracket\nabla u_h \rrbracket:=\nabla u_h^+\cdot n^++\nabla u_h^-\cdot n^-$
on a face shared by two elements $T^+,\, T^-\in \T$, with their respective outward normal unit vectors $n^+$ and $n^-$.

Following the analytical techniques used in \cite{DK15} to prove similar maximum-norm a posteriori error estimates for singularly perturbed reaction-diffusion problems, we shall derive (see Lemma~\ref{lem_upper_apost}) an a posteriori upper bound of the form
\begin{align}
\label{apost_upper}
\|u-u_h\|_{\infty\,;\Omega} \lesssim \max_{T \in \T} \eta_\infty(T).
\end{align}

In the reaction-diffusion case it was also possible to prove the corresponding lower a posteriori bounds (efficiency estimates) \cite{DK15}.  However, when the convection term $\Div(\va u)$ is present in the equation, one cannot expect to prove a standard lower a posteriori bound of the form $\eta_\infty(T) \lesssim \|u-u_h\|_{\infty\,;\omega_T} + {\rm osc}$, where
${\rm osc}$ is a data oscillation term.  Numerical experiments outlined below confirm that such a standard lower bound indeed does not hold.
Instead, our efficiency analysis below reveals that the error bound in \eqref{apost_upper} holds true
for the error in a stronger norm, with a certain seminorm of $\va\cdot\nabla(u-u_h)$ added in the \Blue{left}-hand side.
Furthermore, we shall show that the latter version of \eqref{apost_upper}, with the error measured in this new stronger norm, is efficient.

\subsection{Reliability}\label{ssec_rel}

\begin{lemma}\label{lem_upper_apost}
Under assumptions \eqref{G_bounds} on $G$, the error of the computed solution $u_h$ from \eqref{conf_fem} satisfies \eqref{apost_upper} with the a posteriori error indicators $\eta_\infty(T)$ as defined in \eqref{ind_def}.
\end{lemma}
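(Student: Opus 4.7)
The plan is to use the Green's function error representation: express the pointwise error through $G(x,\cdot)-G_h$, reduce it to elementwise residual and jump terms by integration by parts, and then absorb the weights $\alpha_T^{-1}$ and $\beta_T^{-1}$ into the Green's function via the scaled bounds \eqref{G_bounds}. Starting from \eqref{er_via_green} and invoking Galerkin orthogonality $\BB(u_h-u,G_h)=0$ for the Scott--Zhang interpolant $G_h\in S_h$ of $G(x,\cdot)$, I obtain $(u_h-u)(x)=\eps\langle\nabla u_h,\nabla(G-G_h)\rangle+\langle\Div(\va u_h)+bu_h-f,\,G-G_h\rangle$; an elementwise integration by parts then produces the standard residual/jump decomposition
\begin{equation*}
(u_h-u)(x)=-\sum_{T\in\T}\int_T R_T\,(G-G_h)\,dx+\sum_{e\in\mathcal E}\int_e\eps\llbracket\nabla u_h\rrbracket(G-G_h)\,d\sigma,
\end{equation*}
where $R_T:=\eps\Delta u_h-\Div(\va u_h)-bu_h+f$. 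H\"older's inequality and insertion of the weights $\alpha_T,\beta_T$ then reduce the claim to showing
\begin{equation*}
\sum_{T\in\T}\alpha_T^{-1}\,\|G-G_h\|_{1;T}\;+\;\sum_{e\in\mathcal E}\eps\,\beta_T^{-1}\,\|G-G_h\|_{1;e}\;\lesssim\;1.
\end{equation*}

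To bound this auxiliary quantity I would split $\T$ into a ``near'' family of elements meeting a ball $B(x,\rho)$ of radius $\rho$ comparable to $\underline h$, and a complementary ``far'' family. On a far element, \eqref{G_h} with $j=2,\,k=0$ yields $\|G-G_h\|_{1;T}\lesssim h_T^2|G|_{2,1;\omega_T}$, so that $\alpha_T^{-1}\|G-G_h\|_{1;T}\lesssim\ell_h^{-1}\eps\,|G|_{2,1;\omega_T}$; summing and invoking \eqref{green_W12} with $\rho\simeq\underline h$ produces the cancelling factor $\eps^{-1}\ell_h$ and an $O(1)$ total. On a near element I would instead use \eqref{G_h} with $j=1$ combined with the localised bound \eqref{green_locW11}, obtaining $\|G-G_h\|_{1;T}\lesssim\eps^{-1}h_T^2$ and again an $O(\ell_h^{-1})$ contribution, with the element count kept bounded by shape regularity. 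Wherever $\alpha_T=1$, the crude bound $\|G-G_h\|_{1;T}\lesssim\|G\|_{1;\omega_T}$ together with $\|G\|_{1;\Omega}\lesssim 1$ from \eqref{green_L1} suffices. The jump sum is handled analogously via a trace-type inequality $\|G-G_h\|_{1;e}\lesssim h_T|G|_{1,1;\omega_T}$, paired with either the global bound $\eps^{1/2}|G|_{1,1;\Omega}\lesssim 1$ from \eqref{green_L1} (when $\beta_T=\eps^{1/2}$) or with \eqref{green_locW11}/\eqref{green_W12} (when $\beta_T=\ell_h h_T$).

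The main obstacle is coordinating the two branches of each $\min$ in $\alpha_T$ and $\beta_T$ with the two available Green's function bounds (the localised \eqref{green_locW11} near the singular point versus the exterior-of-ball estimate \eqref{green_W12}), so that the logarithmic factor $\ell_h=1+\ln(2+\eps/\underline h)+|\ln\eps|$ emerges exactly. The splitting radius must be chosen consistently with the breakpoints built into $\alpha_T$ and $\beta_T$; the element actually containing $x$, where $G$ is singular and no $W^{2,1}$ bound is available, is the delicate case and must be treated by the sharper local $W^{1,1}$ estimate \eqref{green_locW11} in place of \eqref{green_W12}.
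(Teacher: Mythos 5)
Your strategy coincides with the paper's: the Green's function representation, Galerkin orthogonality with the Scott--Zhang interpolant $G_h$, elementwise integration by parts, and the reduction to $\|\alpha_T^{-1}(G-G_h)\|_{1\,;\Omega}+\sum_{T}\eps\beta_T^{-1}\|G-G_h\|_{1\,;\pt T}\lesssim 1$, proved by playing the two branches of each $\min$ in $\alpha_T,\beta_T$ against \eqref{green_L1}, \eqref{green_locW11} and \eqref{green_W12}. Two steps, however, would fail as written. First, your near/far splitting is by \emph{elements} meeting $B(x,\rho)$, but the interpolation bound \eqref{G_h} with $j=2$ controls $\|G-G_h\|_{1\,;T}$ by $h_T^2|G|_{2,1\,;\omega_T}$ over the \emph{patch}: a ``far'' element adjacent to the element $T_0\ni x$ still has $x\in\omega_T$, where $|G|_{2,1}$ is not finite, so \eqref{green_W12} cannot be applied to it. The near family must be defined as $\T_0=\{T:\omega_T\cap B(x,c h_{T_0})\neq\emptyset\}$ (the paper takes radius $\simeq h_{T_0}$ rather than $\underline h$, which by shape regularity keeps the cardinality of $\T_0$ bounded and gives $h_T\simeq h_{T_0}$ on it); your closing remark about ``the element actually containing $x$'' identifies the difficulty but does not cover the neighbouring elements whose patches reach the singularity.

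Second, the trace-type inequality you state, $\|G-G_h\|_{1\,;E}\lesssim h_T|G|_{1,1\,;\omega_T}$, is dimensionally inconsistent and false: the scaled trace inequality gives $\|G-G_h\|_{1\,;\pt T}\lesssim\|\nabla(G-G_h)\|_{1\,;T}+h_T^{-1}\|G-G_h\|_{1\,;T}\lesssim\min\{|G|_{1,1\,;\omega_T},\,h_T|G|_{2,1\,;\omega_T}\}$, with no spare power of $h_T$. With the correct bound the bookkeeping is the same as for the volume term: the branch $\eps\beta_T^{-1}\simeq\eps^{1/2}$ pairs with $\eps^{1/2}|G|_{1,1\,;\Omega}\lesssim 1$ from \eqref{green_L1}, while the branch $\eps\beta_T^{-1}\simeq\eps\ell_h^{-1}h_T^{-1}$ requires the $h_T|G|_{2,1\,;\omega_T}$ option together with \eqref{green_W12} on elements outside $\T_0$, and \eqref{green_locW11} (via $\|\nabla G\|_{1\,;\omega_T}$ with $\omega_T\subset B(x,\bar Ch_T)$) on the boundedly many elements of $\T_0$. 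With these two repairs your argument reproduces the paper's proof.
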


\begin{proof}
To estimate the error at any fixed $x\in\Omega$, with slight abuse of notation, let $G(\cdot):=G(x, \cdot)$ be the Green's function from \eqref{Green_prob}.
Recall the error representation \eqref{er_via_green}
and subtract \eqref{conf_fem} with $v_h:=G_h\in S_h$.
Then, with the notation $g:=G-G_h$, one gets
\beq\label{u_h_u_conform}
(u_h-u)(x)=\epsilon \langle\nabla u_h,\, \nabla g\rangle + \langle \Div(\va u_h)+bu_h-f ,\, g\rangle.
\eeq
Next, a standard integration by parts in each $T\in\T$ yields
\beq\label{er_via_green_}
(u_h-u)(x)=
\langle -\eps\Delta_h u_h+\Div(\va u_h)+bu_h-f ,\, g\rangle+
{\textstyle \frac12}\sum_{T\in\T}
\int_{\pt T\backslash\pt \Omega}\!\!\epsilon \llbracket\nabla u_h \rrbracket g .
\eeq

A comparison of \eqref{er_via_green_} and the desired estimate \eqref{ind_def},\,\eqref{apost_upper}
 shows that
it suffices
to prove that
\beq\label{err_int III}
I+\II:=
\|\alpha_T^{-1}g\|_{1\,;\Omega}
+\sum_{T\in\T} 
\eps\beta_T^{-1}\|g\|_{1\,;\pt T}
\lesssim 1.
\eeq
%
When estimating $I$ and $\II$, we shall, to a degree, follow \cite[\S3.2]{DK15}.  A special treatment will be required for the elements in
 $\T_0:=\{T\in\T :\omega_T\cap B(x, c h_{T_0})\neq\emptyset\}$, where $T_0\ni x$. By shape regularity, we may choose $c>0$ sufficiently small so that the number of these elements is uniformly bounded, and thus, by shape regularity, $h_T\simeq h_{T_0}$ $\forall\,T\in\T_0$.

  For $I$,  by \eqref{alpha_beta}, note that $\alpha_T^{-1}\simeq 1+\ell_{h}^{-1} \eps h_T^{-2}$,
 while for $g=G-G_h$, in view of \eqref{G_h}, one has
 $\|g\|_{1\,;T}
 \lesssim \min\bigl\{\|G\|_{1\,;\omega_T},\,h_T\|\nabla G\|_{1\,;\omega_T},\,h_T^2 |G|_{2,1\,;\omega_T}\bigr\}$.
Hence,
\beq\label{I_bound_aux}
I =\|\alpha_T^{-1}g\|_{1\,;\Omega}
\lesssim\|G\|_{1\,;\Omega}+\ell_{h}^{-1} \eps
\Bigl( |G|_{2,1\,;\Omega\backslash B(x, c h_{T_0})}
+\sum_{T\in\T_0}h_T^{-1}\|\nabla G\|_{1\,;\omega_T}
\Bigr)\lesssim 1,
\eeq
\Blue{where $\alpha_T$ is understood as an elementwise-defined piecewise-constant weight.} Here we used the bound \eqref{green_L1} for $\|G\|_{1\,;\Omega}$, and then \eqref{green_W12}
for $|G|_{2,1\,;\Omega\backslash B(x, ch_{T_0})}$.
For each $T\in\T_0$, we employed  \eqref{green_locW11} with the 
ball $B(x, \bar C h_T)\supset \omega_T$,
with a sufficiently large constant $\bar C$ depending only on the shape regularity of $\T$.

For $\II$,
we employ a scaled trace theorem
in the form 
$\|g\|_{1\,;\pt T}\lesssim \|\nabla g\|_{1\,;T}+h_T^{-1}\|g\|_{1\,;T}$.
Combining this with
\eqref{G_h}
yields
$\|g\|_{1\,;\pt T}\lesssim
\min\{ \|\nabla G\|_{1\,;\omega_T}\,,\,h_T| G|_{2,1\,;\omega_T}\}$.
Note also that
 $ \beta_T^{-1}\simeq \eps^{-1/2}+\ell_{h}^{-1} h^{-1}_T$,
 so
  $ \eps\beta_T^{-1}\simeq \eps^{1/2}+\eps\ell_{h}^{-1} h^{-1}_T$.
Combining these observations, one gets
$$
  \II\lesssim \eps^{1/2} \|\nabla G\|_{1\,;\Omega} +\eps \ell_{h}^{-1}
\Bigl(| G|_{2,1\,;\Omega\backslash B(x, c h_{T_0})}
+\sum_{T\in \T_0}  h^{-1}_T \|\nabla G\|_{1\,;\omega_T}\Bigr)\lesssim 1.
$$
%
%
Compared with the above etimation of $I$, we additionally used
$\eps^{1/2} \|\nabla G\|_{1\,;\Omega}\lesssim 1$,
in view of~\eqref{green_L1}.
This completes the proof of \eqref{err_int III}.
\end{proof}

\Blue{\begin{remark}[Nonhomogeneous Dirichlet and Neumann boundary conditions]  If $u=g$ and $u_h=g_h$ on $\partial \Omega$, then  \eqref{apost_upper} holds with an additional term $\|g-g_h\|_{L_\infty(\partial \Omega}$ added to the right hand side assuming sufficient regularity of $g$.  In particular, denoting $e=u-u_h$, let $e=e_{int} + e_{\partial}$ with $e_{\partial}=g-g_h$ on $\partial \Omega$ satisfying $\eps \langle \nabla e_{\partial}, \nabla v \rangle + \langle (\Div(\va e_{\partial}) + b, v \rangle=0$, $v \in H_0^1(\Omega)$.  Then $\|e_\partial \|_{\infty \, ; \Omega} \le \|g-g_h\|_{\infty \, ;\Omega}$ by the weak maximum principle, and one can show in a manner similar to above that $\|e_{int}\|_{\infty \, ; \Omega} \lesssim \max_{T \in \T} \eta_\infty(T)$.  The case of Neumann boundary conditions is less clear.  We are unaware of a posteriori maximum norm bounds in the literature for Neumann boundary conditions even for symmetric or non-singularly perturbed problems.  Some initial groundwork for the present singularly perturbed convection-diffusion case is contained in \cite{FKPP}, where Green's functions estimates are proved assuming homogeneous Neumann conditions along the characteristic boundaries.
\end{remark}}

\subsection{Efficiency of the volume residual}\label{ssec_eff_1}

We start with the volume residual term
in \eqref{ind_def}.
The following notation will be used. In $\Omega$, let
$e:=u-u_h$, and then define the residual 
\beq\label{residual}
R_h:=-\eps \Delta u_h+\Div(\va u_h)+bu_h-f\qquad \mbox{in~any}\; T\in\T.
\eeq
Additionally, for any $T\in \T$, let $R_{h,T}$ be the $L_2$ projection of $R_h$ over $T$ onto the space of polynomials $\mathbb{P}^{r-1}(T)$ of degree $r-1$.
Also, using the barycenric coordinates $\{\lambda_i\}_{i=1}^{n+1}$ associated with $T$,
define a standard bubble function $b_T:=\prod_{i=1}^{n+1}\lambda_i^2$.

\begin{lemma}\label{lem_eff1}
For any
$u_h\in S_h$ in any
$T\in\T$, one has
\beq
\label{lower_f}
\alpha_T\|R_h\|_{\infty\,;T}\lesssim
\ell_h \|e\|_{\infty\,;T}
+\alpha_T\sup_{{\psi \in \mathbb{P}^{r-1}(T):}
{ \|\psi\|_{1\,;T}=1}} \left|\int_T b_T \psi\, \va\cdot\nabla e\right|
+\alpha_T\|R_h-R_{h,T}\|_{\infty\,;T}\,.
\eeq
\end{lemma}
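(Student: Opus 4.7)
The plan is to follow the classical Verf\"urth bubble-function efficiency argument, but (i) carried out in $L_\infty/L_1$ duality instead of $L_2$, and (ii) modified so that the convective error is isolated rather than controlled by brute force.

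First, by the triangle inequality it suffices to bound $\alpha_T\|R_{h,T}\|_{\infty\,;T}$, since the term $\alpha_T\|R_h-R_{h,T}\|_{\infty\,;T}$ already appears on the right-hand side. Since $\mathbb{P}^{r-1}(T)$ is finite dimensional, a scaling argument on the reference element shows the bubble-weighted norm equivalence $\int_T b_T\psi^2 \simeq \|\psi\|_{2\,;T}^2\simeq |T|\,\|\psi\|_{\infty\,;T}^2$ for $\psi\in \mathbb{P}^{r-1}(T)$. Applying this with the choice $\psi_0 := R_{h,T}/\|R_{h,T}\|_{1\,;T}$ (so that $\|\psi_0\|_{1\,;T}=1$) and using $\|R_{h,T}\|_{1\,;T}\le |T|\,\|R_{h,T}\|_{\infty\,;T}$ yields
\begin{equation*}
\|R_{h,T}\|_{\infty\,;T}\;\lesssim\;\sup_{\psi\in\mathbb{P}^{r-1}(T),\ \|\psi\|_{1\,;T}=1}\int_T b_T\,\psi\, R_{h,T}.
\end{equation*}

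Next, fix such a $\psi$ and set $\phi := b_T\psi\in H_0^1(T)$. Writing $\int_T \phi R_{h,T}=\int_T \phi R_h+\int_T \phi(R_{h,T}-R_h)$, the second integral is dominated by $\|\psi\|_{1\,;T}\|R_h-R_{h,T}\|_{\infty\,;T}$, which, after multiplication by $\alpha_T$, is absorbed into the last term of the claimed bound. For the first integral I would use $R_h = -Le$ in $T$, together with the identity $\int_T\Div(\va e)\phi = -\int_T\phi\,\va\cdot\nabla e - \int_T(\Div\va)\,e\,\phi$ (valid since $\phi$ vanishes on $\partial T$), to obtain
\begin{equation*}
\int_T R_h\phi \;=\; -\eps\int_T\nabla e\cdot\nabla\phi \;-\;\int_T \phi\,\va\cdot\nabla e\;-\;\int_T(b+\Div\va)\,e\,\phi.
\end{equation*}

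Finally, each right-hand side term is estimated. The middle term is retained verbatim as $\int_T b_T\psi\,\va\cdot\nabla e$, contributing to the dual-type seminorm on the right. The reaction-type term is bounded by $\|e\|_{\infty\,;T}\|\phi\|_{1\,;T}\lesssim \|e\|_{\infty\,;T}$. For the diffusion term I would integrate by parts once more to get $|\eps\int_T e\,\Delta\phi|\le \eps\|e\|_{\infty\,;T}\|\Delta\phi\|_{1\,;T}$ and apply an inverse estimate on the polynomial $\phi$ to obtain $\|\Delta\phi\|_{1\,;T}\lesssim h_T^{-2}\|\phi\|_{1\,;T}\lesssim h_T^{-2}$. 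Multiplying through by $\alpha_T$ and using $\alpha_T\cdot \eps h_T^{-2}\le \ell_h$ (from the very definition $\alpha_T=\min\{1,\ell_h\eps^{-1}h_T^2\}$) converts this contribution into $\ell_h\|e\|_{\infty\,;T}$, as required; the reaction contribution is even easier since $\alpha_T\le 1\le \ell_h$.

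The step I expect to be most delicate is the convective integration by parts: a na\"ive estimate $|\int_T\va e\cdot\nabla\phi|\lesssim \|e\|_{\infty\,;T}\|\nabla\phi\|_{1\,;T}\lesssim h_T^{-1}\|e\|_{\infty\,;T}$ would, after multiplication by $\alpha_T$, produce a factor $\ell_h\eps^{-1}h_T$, which is not uniformly bounded in the convection-dominated regime. The whole point of introducing the sup over $\psi$ in the statement is precisely to avoid this loss by keeping the convective derivative of $e$ as a dual-type quantity, in parallel with Verf\"urth's energy-norm framework but now adapted to the maximum norm.
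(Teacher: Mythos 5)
Your proposal is correct and follows essentially the same route as the paper: an $L_\infty$--$L_1$ bubble duality for $R_{h,T}$, a double integration by parts of the diffusion term, retention of the convective derivative tested against $b_T\psi$ as a dual-type quantity, and the bound $\alpha_T(\eps h_T^{-2}+1)\lesssim \alpha_T^{-1}\cdot\alpha_T(\eps h_T^{-2}+1)\lesssim\ell_h$; your closing remark about why one must not integrate the convection term by parts matches the paper's own remark verbatim in spirit. Two minor points to fix: the intermediate ``identity'' $\int_T\Div(\va e)\,\phi=-\int_T\phi\,\va\cdot\nabla e-\int_T(\Div\va)\,e\,\phi$ is wrong as written (the product rule gives it with $+$ signs and needs no boundary condition on $\phi$; your displayed formula for $\int_T R_h\phi$ is nonetheless correct). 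Also, the second integration by parts of the diffusion term generates a boundary contribution $\eps\int_{\partial T}e\,\nabla\phi\cdot n$, which vanishes only because $b_T=\prod_{i=1}^{n+1}\lambda_i^2$ forces $\nabla\phi=0$ on $\partial T$; this is precisely why the squared bubble is used (the paper notes that with $\prod_{i=1}^{n+1}\lambda_i$ this term survives), and you should state it explicitly.
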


\begin{proof}
%
For the bubble function
$b_T=\prod_{i=1}^{n+1}\lambda_i^2$,
standard arguments show
that $\|R_{h,T}\|_{\infty\,;T}\|R_{h,T}\|_{1\,;T}\lesssim \|R_{h,T}\|_{2\,;T}^2\lesssim \int_T b_T R_{h,T}^2$
\cite[\S1.3.4]{Ver_book_13}.
%
Next, for the function $w_T:=b_T R_{h,T}\|R_{h,T}\|^{-1}_{1\,;T}\in \mathbb{P}^{2n+r+1}(T)$, note that
both $w_T$ and $\nabla w_T$ vanish on $\pt T$, while
$h_T^2\|\Delta w_T\|_{1\,;T}\lesssim \|w_T\|_{1\,;T}\lesssim 1$.
Additionally, one gets $\|R_{h,T}\|_{\infty\,;T}\lesssim \int_T w_T R_{h,T} $, which yields
\beq\label{aux22}
\|R_h\|_{\infty\,;T}\lesssim \int_T w_T R_h
+\|R_h-R_{h,T}\|_{\infty\,;T}.
\eeq
Here $R_h=\eps\Delta e-\Div(\va e)-be$
in terms of $e=u-u_h$, so integrating by parts twice the term with $\Delta$,
one gets
\begin{align}\notag
\int_T w_T R_h &= \int_T w_T (\eps\Delta e-\Div(\va e)-be) \\
&= \int_T \bigl(\eps \Delta w_T -(\Div \va+b)w_T \bigr)e-\int_Tw_T  \va\cdot\nabla e .
\label{above_double_int_n_N}
\end{align}
Note that $\|\eps \Delta w_T -(\Div \va+b)w_T \|_{1\,;T}\lesssim \eps h_T^{-2}+1\lesssim\alpha_T^{-1}\ell_{h}$.
Also, with $\psi_T:=R_{h,T}/\|R_{h,T}\|_{1\,;T}$, one has $w_T= b_T \psi_T$
in the final term of \eqref{above_double_int_n_N}, where
$\psi_T \in \mathbb{P}^{r-1}$ and $\|\psi_T\|_{1\,;T}=1$.
It remains to
combine these two observations with \eqref{above_double_int_n_N} and \eqref{aux22}, both multiplied by $\alpha_T$.
\end{proof}

\begin{remark}  Instead of arguing as in \eqref{above_double_int_n_N}, one could integrate the convection term by parts in order to obtain $|-\int_T w_T \Div(\va e) |= |\int_T e \va \cdot \nabla w_T| \lesssim \|\nabla w_T\|_{1\,;T} \|e\|_{\infty\,; T} \lesssim h_T^{-1} \|e\|_{\infty\,; T}$.  Note that $ h_T^{-1} \alpha_T = \min\{ h_T^{-1}, \ell_h h_T \eps^{-1} \}$, which is not bounded by $\ell_h$ until $h_T \eps^{-1} \le 1$.  This argument does not yield a suitable efficiency result, thus the need for the additional term in \eqref{lower_f} .
\end{remark}

It is convenient to denote
the seminorm  of $\va\cdot\nabla e$ present in 
\eqref{lower_f} by
\beq\label{new_norm}
|\va \cdot \nabla e|_{\ast\,;T}:=
 \alpha_T\sup_{{\psi \in \mathbb{P}^{r-1}(T):}
{ \|\psi\|_{1\,;T}=1}} \left|\int_T b_T \psi\,\va\cdot\nabla e\right|
\qquad\forall\, T\in\T.
\eeq

\begin{corollary}\label{cor_eff1}
For any
$u_h\in S_h$ in any
$T\in\T$, one has
\begin{align}\label{eff_cor1}
\alpha_T\|R_h\|_{\infty\,;T}   & \lesssim
\ell_h  \|e\|_{\infty\,;T}
+|\va \cdot \nabla e|_{\ast\,;T}
+\alpha_T\|R_h-R_{h,T}\|_{\infty\,;T}\,, \\[2pt]
\label{eff_cor11}
  |\va \cdot \nabla e|_{\ast\,;T} & \lesssim \ell_h \|e\|_{\infty\,;T}+\alpha_T\|R_h\|_{\infty\,;T}\,.
\end{align}
\end{corollary}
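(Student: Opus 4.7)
The first inequality \eqref{eff_cor1} is essentially a restatement of Lemma~\ref{lem_eff1}: the supremum expression appearing in \eqref{lower_f}, once multiplied by $\alpha_T$, is precisely $|\va \cdot \nabla e|_{\ast\,;T}$ by definition \eqref{new_norm}. Nothing further is needed beyond identifying the two expressions.

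The substance lies in proving the reverse bound \eqref{eff_cor11}. The plan is to take an arbitrary $\psi \in \mathbb{P}^{r-1}(T)$ with $\|\psi\|_{1\,;T} = 1$, set $w:=b_T\psi$, and rewrite $\int_T w\,\va\cdot\nabla e$ in a form where only $e$ and $R_h$ appear. Subtracting $Lu=f$ from the definition \eqref{residual} of $R_h$ yields the pointwise identity
\begin{equation*}
\va\cdot\nabla e \;=\; \eps\Delta e - (\Div\va + b)\,e - R_h \qquad\text{in } T.
\end{equation*}
Multiplying by $w$ and integrating by parts twice in the Laplacian term (legitimate because $b_T = \prod_i \lambda_i^2$ ensures that both $w$ and $\nabla w$ vanish on $\pt T$) gives
\begin{equation*}
\int_T w\,\va\cdot\nabla e \;=\; \int_T \bigl(\eps\Delta w - (\Div\va + b)\,w\bigr)\,e \;-\; \int_T w\, R_h.
\end{equation*}

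The two resulting integrals are now straightforward to bound. For the second, $|\int_T w R_h| \le \|w\|_{1\,;T}\|R_h\|_{\infty\,;T} \lesssim \|R_h\|_{\infty\,;T}$, since $|b_T|\le 1$ and $\|\psi\|_{1\,;T}=1$. For the first, an inverse inequality applied to the polynomial $w$ gives $\|\Delta w\|_{1\,;T} \lesssim h_T^{-2}\|w\|_{1\,;T}\lesssim h_T^{-2}$, while the smoothness of $\va$ and $b$ yields $\|(\Div\va+b)w\|_{1\,;T}\lesssim 1$. Combining these,
\begin{equation*}
\eps\|\Delta w\|_{1\,;T} + \|(\Div\va+b)w\|_{1\,;T} \;\lesssim\; \eps h_T^{-2}+1.
\end{equation*}

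The key observation which ties everything to the weight $\alpha_T$ is that $\alpha_T^{-1}\ell_h = \max\{\ell_h,\,\eps h_T^{-2}\} \gtrsim \eps h_T^{-2}+1$, exactly as already used in the proof of Lemma~\ref{lem_eff1}. Consequently the first integral is bounded by $\ell_h\alpha_T^{-1}\|e\|_{\infty\,;T}$. Putting the two bounds together, taking the supremum over admissible $\psi$, and multiplying by $\alpha_T$ yields
\begin{equation*}
|\va\cdot\nabla e|_{\ast\,;T} \;\lesssim\; \ell_h\|e\|_{\infty\,;T} + \alpha_T\|R_h\|_{\infty\,;T},
\end{equation*}
which is \eqref{eff_cor11}. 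No step here is a serious obstacle; the only delicate point is ensuring the weight $\alpha_T$ correctly absorbs the $\eps h_T^{-2}$ arising from the inverse inequality, which is handled by the same elementary relation between $\alpha_T^{-1}$ and $\ell_h$ used earlier.
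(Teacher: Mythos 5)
Your proof is correct and follows essentially the same route as the paper: identify \eqref{eff_cor1} with \eqref{lower_f}, then rerun the bubble-function identity \eqref{above_double_int_n_N} with an arbitrary unit-$L_1$ polynomial $\psi$ in place of $\psi_T$, replacing the lower bound $\|R_{h,T}\|_{\infty\,;T}\lesssim\int_T w_T R_{h,T}$ by the trivial upper bound $|\int_T w R_h|\lesssim\|R_h\|_{\infty\,;T}$ and absorbing $\eps h_T^{-2}+1$ into $\alpha_T^{-1}\ell_h$. The only cosmetic difference is that you re-derive the integration-by-parts identity (solved for $\int_T w\,\va\cdot\nabla e$) rather than citing it from the proof of Lemma~\ref{lem_eff1}.
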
\vspace{-0.2cm}

\begin{proof}
The first assertion \eqref{eff_cor1} is equivalent to
\eqref{lower_f}.
To prove the second, note that
\eqref{above_double_int_n_N}, as well as most estimates
in the proof of Lemma~\ref{lem_eff1}
that involve $w_T=b_T \psi_T$, also hold true for a more general
$w_T:=b_T \psi$ with an arbitrary $\psi \in \mathbb{P}^{r-1}(T)$ such that $\|\psi\|_{1\,;T}=1$.
The only exceptions are \eqref{aux22} and
 the related bound $\|R_{h,T}\|_{\infty\,;T}\lesssim \int_T w_T R_{h,T} $,
 which are no longer true.  Instead we shall now employ
$|\int_T w_T R_h|\lesssim \|R_h\|_{\infty\,;T}$.
Hence, from \eqref{above_double_int_n_N} with this more general $w_T$, one gets
the following version of \eqref{lower_f}:
$$
\alpha_T
\left|\int_T b_T \psi\, \va\cdot\nabla e\right|\lesssim
\ell_h \|e\|_{\infty\,;T}
+\alpha_T\left|\int_T w_T R_h\right|,
$$
and then the desired \eqref{eff_cor11}.
\end{proof}
}

\begin{remark}
One can extend the above Lemma~\ref{lem_eff1} and Corollary~\ref{cor_eff1} for a slightly simpler bubble function $b_T=\prod_{i=1}^{n+1}\lambda_i$
(instead of $b_T=\prod_{i=1}^{n+1}\lambda_i^2$). Note that in this case, \eqref{above_double_int_n_N} will additionally involve
$\int_{\pt T}\eps e\nabla w_h \cdot n$, so the proof will be slightly more involved.
\end{remark}

{
\subsection{Efficiency of the jump residual}\label{ssec_eff_2}
Now we turn to the jump residual term
in \eqref{ind_def}, for which
following \cite{DK15}, we modify the standard edge residual efficiency proof by employing subscale mesh elements when $h_T$ does not resolve $\sqrt{\epsilon}$.

The following notation will be used.
For an interior face $E\in\mathcal E$, shared by $T^+$ and $T^-$ in $\T$,
construct two, not necessarily shape-regular, sub-simplices $T^\pm_E\subset T^\pm$ that
share the entire face $E$ and satisfy 
$$
|T_E^\pm|= \hat h_{E}|E|,\quad
\hat h_{E}:=\min\bigr\{\eps^{1/2},\, |T^+||E|^{-1},\, |T^-||E|^{-1}\bigr\}
\simeq\min\bigr\{\eps^{1/2},\, h_{T^\pm}\bigr\}.
$$
To be more specific,
unless $T^\pm_E=T^\pm$,
one may impose that the vertex of $T^\pm_E$ opposite to $E$ lies on the corresponding median line in $T^\pm$.  The \Blue{simplices} $T^\pm_E$ do not necessarily satisfy either a minimum or maximum angle condition, so it is necessary to take extra care in our arguments below at a couple of points.

\Blue{Next let $\{\lambda^\pm_i\}_{i=1}^{n+1}$ be the barycentric coordinates in $T^\pm_{E}$.  Assume that $\lambda^\pm_{n+1}|_E\equiv 0$ so that $\{\lambda^\pm_i\}_{i=1}^{n}$ are the barycentric coordinates associated with the vertices of $E$.  For the tangential gradient along $E$ we have {$|\nabla_E \lambda_i^\pm| \lesssim |{\rm diam}\,E|^{-1} \lesssim \hat h_E^{-1}$},  whereas in the direction perpendicular to $E$ the height of the triangle is $2 \hat h_E$ and so $|\partial_{E^\perp} \lambda^\pm_i| \lesssim \hat h_E ^{-1}$ with constant depending on the shape regularity of $T^\pm$.  Finally, define a standard face bubble $b_E:=\prod_{i=1}^{n}(\lambda^\pm_i)^2$ on $T^\pm_E$, and the seminorm}
\beq\label{new_norm2}
|\va\cdot\nabla e|_{\ast\,;E}:=
 \sup_{{\varphi \in \mathbb{P}^{r-1}(E):}
{ \|\varphi\|_{1\,;E}=1}}
\hat h_E^{-1}\left|\int_{T_{E}^+\cup T_{E}^-}\alpha_T\, b_E\, \varphi\, \va\cdot\nabla e\right|.
\eeq
Here, inside the volume integral,  $\varphi \in \mathbb{P}^{r-1}(E)$ is understood as
extended to $\mathbb{R}^n$ such that it
remains constant in the direction  normal  to $E$.



\begin{lemma}\label{lem_eff2}
Let $u_h\in S_h$.
For any interior face $E=\pt T^+\cap\pt T^-$, shared by $T^+$ and $T^-$ in $\T$,
with
$\beta_E:=\min\{\eps^{1/2},\,\ell_{h} h_{T^+},\,\ell_{h} h_{T^-}\}\simeq\beta_{T^\pm}$,
one has
\beq\label{lower_jump}
\beta_E\|\llbracket\nabla u_h \rrbracket\|_{\infty\,;E}
\lesssim
\ell_h\|e\|_{\infty\,;T^+\cup T^-}
+|\va\cdot\nabla  e|_{\ast\,;E}
+\|\alpha_T R_h\|_{\infty\,;T^+\cup T^-}\,.
\vspace{-0.2cm}
\eeq
\end{lemma}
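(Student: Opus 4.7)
The plan is to adapt the bubble-function efficiency argument of Lemma~\ref{lem_eff1} to the face-bubble setting on the subscale diamond $D := T_E^+ \cup T_E^-$. Since $J := \llbracket \nabla u_h \rrbracket \in \mathbb{P}^{r-1}(E)$, I would set $\varphi := J/\|J\|_{1;E}$ and invoke the standard norm equivalence on $\mathbb{P}^{r-1}(E)$ together with the nonnegativity of $b_E$ to obtain $\|J\|_{\infty;E} \lesssim \int_E b_E \varphi J$. Writing $w := b_E \varphi$ on $D$ (with $\varphi$ extended constantly in the direction normal to $E$), I would observe that $w$ is continuous across $E$ and that both $w$ and $\nabla w \cdot n$ vanish on $\partial D$, thanks to the quadratic vanishing of $b_E$ on every face of $T_E^\pm$ distinct from $E$; only the normal component of $\nabla w$ may jump across $E$, with $\|\llbracket \nabla w \rrbracket\|_{\infty;E} \lesssim \hat h_E^{-1}\|\varphi\|_{\infty;E}$.

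Next I would establish an error identity on $D$. Integrating $\eps \nabla u_h \cdot \nabla w + \eps \Delta_h u_h \,w$ by parts on each sub-simplex and summing recovers $\eps \int_E J\, w$. Subtracting the boundary-term-free analogue for the smooth exact solution $u$, then applying $Le = -R_h$ pointwise on each sub-simplex and integrating by parts once more in the resulting $-\eps \int_D \nabla e \cdot \nabla w$ term, one arrives at
\begin{align*}
\eps \int_E J\, w
  = \eps \int_D e\,\Delta_h w - \eps \int_E e\,\llbracket \nabla w \rrbracket
   - \int_D w\, \va \cdot \nabla e - \int_D w\,(\Div \va + b)\, e - \int_D w\, R_h .
\end{align*}

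It remains to multiply by $\beta_E/\eps$ and estimate each of the five terms. Using the scalings $|\nabla \lambda_i^\pm| \lesssim \hat h_E^{-1}$ supplied in the paper (which compensate for the possibly degenerate geometry of $T_E^\pm$), one checks that, with $\|\varphi\|_{1;E}=1$, $\|w\|_{1;D} \lesssim \hat h_E$, $\|\Delta_h w\|_{1;D} \lesssim \hat h_E^{-1}$, and $\|\llbracket\nabla w\rrbracket\|_{1;E} \lesssim \hat h_E^{-1}$. The first two terms on the right then contribute $\beta_E \hat h_E^{-1}\|e\|_\infty$; the fourth and fifth contribute $(\beta_E \hat h_E/\eps)\bigl(\|e\|_\infty + \|R_h\|_\infty\bigr)$; and the third is identified with the seminorm $|\va\cdot\nabla e|_{\ast;E}$, noting that this $\varphi$ is admissible in its defining supremum and that $\alpha_T$ is approximately constant on $D$ by shape regularity, so it can be factored out of the integral up to a constant.

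The main obstacle is verifying the two scaling relations $\beta_E \lesssim \ell_h \hat h_E$ and $\beta_E \hat h_E \lesssim \eps\,\alpha_{T^\pm}$, which together convert the contributions above into $\ell_h\|e\|_\infty + |\va\cdot\nabla e|_{\ast;E} + \|\alpha_T R_h\|_{\infty;T^+\cup T^-}$. Both require a routine case split on the relative sizes of $\eps^{1/2}$, $h_{T^\pm}$, and $\ell_h h_{T^\pm}$; the definition $\hat h_E = \min\{\eps^{1/2},\, h_{T^\pm}\}$ is precisely what makes the terms balance in all regimes.
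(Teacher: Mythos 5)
Your proposal is correct and follows essentially the same route as the paper: the face bubble $b_E$ on the subscale simplices $T_E^\pm$, the constant normal extension of $\varphi=J/\|J\|_{1;E}$, the double integration by parts producing the extra face terms $-\eps\int_E(Jw+\llbracket\nabla w\rrbracket e)$, the scalings $\|w\|_{1;D}\lesssim\hat h_E$, $\|\Delta_h w\|_{1;D}\lesssim\hat h_E^{-1}$, $\|\llbracket\nabla w\rrbracket\|_{1;E}\lesssim\hat h_E^{-1}$, and the two key relations $\beta_E\hat h_E^{-1}\lesssim\ell_h$ and $\beta_E\hat h_E\lesssim\eps\alpha_{T^\pm}$ are all exactly the ingredients of the paper's proof (its identity \eqref{new_w_K} and bounds \eqref{w_K}). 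Your observation that $\alpha_{T^+}\simeq\alpha_{T^-}$ by shape regularity, so the weight can be factored in and out of the integral, is a legitimate (and slightly more explicit) way to land in the seminorm $|\va\cdot\nabla e|_{\ast;E}$.
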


\begin{proof}
Set $J_h:=\llbracket \nabla u_h \rrbracket\in \mathbb{P}^{r-1}(E)$, for which
%
%
%
standard arguments show
that $\|J_h\|_{\infty\,;E}\|J_h\|_{1\,;E}\lesssim \|J_h\|_{2\,;E}^2
\lesssim \int_{E} b_{E} J_h^2$
\cite[\S1.3.4]{Ver_book_13}.
So using the function $w_E:=b_E J_h\|J_h\|^{-1}_{1\,;E}\in \mathbb{P}^{2n+r-1}(E)$,
one gets $\|J_h\|_{\infty\,;E}\lesssim \int_{E} J_h w_E$.
Note that $J_h$, as a polynomial function on $E$, can be extended to the entire $(n-1)$-dimensional plane
that contains $E$. Next, extend it to $\mathbb{R}^n$ by letting
$J_h$ remain constant in the direction  normal  to $E$.
This extends $w_E$ to $T_{E}^+\cup T_{E}^-$, with
both $w_E$ and $\nabla w_E$ vanishing on $\pt T^\pm_{E}\backslash E$.
Note also  that (as elaborated in Remark~\ref{rem_w_K} below)
\beq\label{w_K}
\hat h_E^2\|\Delta w_E\|_{1\,;T^\pm_{E}}
\lesssim \|w_E\|_{1\,;T^\pm_{E}}\lesssim \hat h_E,
\qquad
\hat h_E\|\llbracket\nabla w_E \rrbracket\|_{1\,;E}\lesssim  1.
\eeq
%
A version of \eqref{above_double_int_n_N}
(taking into account, when carrying out the integration by parts twice,
that $w_E$ and $\nabla w_E$ do not vanish on $E$) yields
\begin{align}\notag
\int_{T_{E}^+\cup T_{E}^-} w_E R_h 
&= \int_{T_{E}^+\cup T_{E}^-} \bigl(\eps \Delta w_E -(\Div \va+b)w_E \bigr)e-\int_{T_{E}^+\cup T_{E}^-} w_E  \va\cdot\nabla e
\\&\qquad\qquad{}
-\int_{E} \eps\bigl(J_h\, w_E+\llbracket\nabla w_E \rrbracket\, e\bigr),
\label{new_w_K}
\end{align}
where we used $\llbracket\nabla e \rrbracket=-\llbracket\nabla u_h \rrbracket=-J_h$ on $E$.
Next, recalling that $\|J_h\|_{\infty\,;E}\lesssim \int_{E} J_h w_E$, and also \eqref{w_K}, one gets
$$
\eps\|J_h\|_{\infty\,;E}\lesssim
(\eps\hat h_E^{-1}+\hat h_E)\|e\|_{\infty\,;T^+\cup T^-}
+\left|\int_{T_{E}^+\cup T_{E}^-}\! w_E\, \va\cdot\nabla e\right|
+\hat h_E\|R_h\|_{\infty\,;T^+\cup T^-}\,.
$$
Multiply this by $\eps^{-1}\beta_E$
and note that, in view of  $\hat h_E\le \eps^{1/2}$, one has
$\eps \hat h_E^{-1}+\hat h_E
\le 2\eps \hat h_E^{-1}$,
so
$\eps^{-1}\beta_E(\eps \hat h_E^{-1}+\hat h_E)\lesssim \beta_E \hat h_E^{-1}\lesssim \ell_h$.
Also, for each $T=T^\pm$, with
$\alpha_T=\min\{1,\,\ell_{h} \eps^{-1}h_T^2\}$,
note that
 $\beta_E\hat h_E\lesssim \min\{\eps,\,\ell_{h} h_T^2\}=\eps\alpha_T$ yields
$\eps^{-1}\beta_E\lesssim\hat h_E^{-1}\alpha_T$.
So
$$
\beta_E\|J_h\|_{\infty\,;E}
\lesssim
\ell_h\|e\|_{\infty\,;T^+\cup T^-}
+ \hat h_E^{-1}\left|\int_{T_{E}^+\cup T_{E}^-} \alpha_T\,w_E\,  \va\cdot\nabla e\right|
+\|\alpha_T R_h\|_{\infty\,;T^+\cup T^-}\,.
$$
It remains to note that $ w_E=b_E\varphi_E$, where $\varphi_E:=J_h\|J_h\|^{-1}_{1\,;E}$
satisfies $\|\varphi_E\|_{1\,;E}= 1$.
\end{proof}

\begin{corollary}\label{cor_eff2}
Under the conditions of Lemma~\ref{lem_eff2}, one has
\begin{align}\label{eff_cor2}
  \beta_E\|\llbracket\nabla u_h \rrbracket\|_{\infty\,;E} & \lesssim
\ell_h\|e\|_{\infty\,;T^+\cup T^-}
+|\va \cdot \nabla e|_{\ast\,;E}
+\|\alpha_T R_h\|_{\infty\,;T^+\cup T^-}\,, \\
\label{eff_cor22}
  |\va \cdot \nabla e|_{\ast\,;E}
&\lesssim
\ell_h\|e\|_{\infty\,;T^+\cup T^-}
+\beta_E\|\llbracket\nabla u_h \rrbracket\|_{\infty\,;E}
+\|\alpha_T R_h\|_{\infty\,;T^+\cup T^-}\,.
\end{align}
\end{corollary}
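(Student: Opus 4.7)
The first inequality \eqref{eff_cor2} is immediate from Lemma~\ref{lem_eff2}. Indeed, the specific $w_E = b_E J_h\|J_h\|_{1\,;E}^{-1}$ used in the proof of \eqref{lower_jump} has the admissible form $b_E\varphi_E$ with $\varphi_E := J_h\|J_h\|_{1\,;E}^{-1} \in \mathbb{P}^{r-1}(E)$ and $\|\varphi_E\|_{1\,;E}=1$, so the convective contribution $\hat h_E^{-1}|\int_{T_E^+\cup T_E^-}\alpha_T w_E\,\va\cdot\nabla e|$ appearing on the right of \eqref{lower_jump} is bounded by $|\va\cdot\nabla e|_{\ast\,;E}$ from \eqref{new_norm2}.

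For \eqref{eff_cor22} I would follow the same device that produced \eqref{eff_cor11} from Lemma~\ref{lem_eff1}. Fix an arbitrary $\varphi \in \mathbb{P}^{r-1}(E)$ with $\|\varphi\|_{1\,;E}=1$, extend it to be constant in the direction normal to $E$, and set $w_E := b_E\varphi$. The identity \eqref{new_w_K} and the scaling estimates \eqref{w_K} remain valid for this general $w_E$, since their derivation only uses the polynomial structure of $b_E\varphi$ and the vanishing of $w_E$ and $\nabla w_E$ on $\partial T_E^\pm \setminus E$. The only step in the proof of Lemma~\ref{lem_eff2} that does not generalize is the bubble identity $\|J_h\|_{\infty\,;E} \lesssim \int_E J_h w_E$; we no longer need it, and instead estimate the jump contribution directly by $|\int_E\eps J_h w_E| \le \eps\|J_h\|_{\infty\,;E}\|w_E\|_{1\,;E} \lesssim \eps\|J_h\|_{\infty\,;E}$.

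Next, I would solve \eqref{new_w_K} for $\int_{T_E^+\cup T_E^-} w_E\,\va\cdot\nabla e$, multiply elementwise by the piecewise-constant weight $\alpha_T\hat h_E^{-1}$, and pass to the supremum over admissible $\varphi$. The four resulting contributions are then controlled by scaling relations already implicit in Lemma~\ref{lem_eff2}. Specifically, \eqref{w_K} yields $\alpha_T \hat h_E^{-1}(\eps\hat h_E^{-1}+\hat h_E) \lesssim \ell_h$, which absorbs both the volume $e$-integral and the face contribution involving $\llbracket\nabla w_E\rrbracket\,e$ into $\ell_h\|e\|_{\infty\,;T^+\cup T^-}$; the identity $\alpha_T\hat h_E^{-1}\cdot\hat h_E = \alpha_T$ produces $\|\alpha_T R_h\|_{\infty\,;T^+\cup T^-}$; and the equivalence $\alpha_T\eps \simeq \beta_E\hat h_E$, already used in Lemma~\ref{lem_eff2} in the opposite direction, produces $\beta_E\|\llbracket\nabla u_h\rrbracket\|_{\infty\,;E}$. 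Checking this equivalence in the two regimes $h_T \le \eps^{1/2}$ and $h_T > \eps^{1/2}$ is elementary.

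The main technical nuisance is bookkeeping the element-dependent weight $\alpha_T$ across the shared face $E$: the jump contribution in \eqref{new_w_K} is a single face integral, whereas the seminorm \eqref{new_norm2} carries $\alpha_T$ as a piecewise constant in the volumes on both sides of $E$. The clean way around this is to apply the per-element version of the identity on each of $T_E^+$ and $T_E^-$ separately, multiply by the corresponding $\alpha_{T^\pm}$, and absorb any resulting mismatch into $\max(\alpha_{T^+},\alpha_{T^-})$, which is in turn controlled by the elementwise notation $\|\alpha_T\,\cdot\|_{\infty\,;T^+\cup T^-}$ appearing on the right of \eqref{eff_cor22}.
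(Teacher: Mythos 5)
Your argument is correct and follows essentially the same route as the paper: \eqref{eff_cor2} is just a restatement of \eqref{lower_jump}, and \eqref{eff_cor22} is obtained by rerunning the proof of Lemma~\ref{lem_eff2} with a general $w_E=b_E\varphi$, discarding the bubble inequality $\|J_h\|_{\infty\,;E}\lesssim\int_E J_h w_E$ in favour of the trivial bound $|\int_E J_h w_E|\lesssim\|J_h\|_{\infty\,;E}$, and redoing the weight bookkeeping. You are in fact more explicit than the paper about the required scalings; note only that the paper verifies $\beta_E\hat h_E\lesssim\eps\alpha_T$ and leaves the reverse direction implicit, and your claimed ``equivalence'' $\eps\alpha_T\simeq\beta_E\hat h_E$ holds only up to a factor $\ell_h^{1/2}$ in the narrow regime $\eps^{1/2}/\ell_h<h_T\le\eps^{1/2}$ --- a caveat that applies equally to the paper's own proof and is harmless at the level of logarithmic precision of Theorem~\ref{theo_main}.
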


\begin{proof}
The first assertion \eqref{eff_cor2} is equivalent to \eqref{lower_jump}.
For the second (similarly to the proof of Corollary~\ref{cor_eff1})
we note that \eqref{new_w_K}, as well as most evaluations in the proof of Lemma~\ref{lem_eff2}, holds true for a more general $w_E:=b_E\varphi$
with an arbitrary $\varphi\in \mathbb{P}^{r-1}(E)$ subject to $\|\varphi\|_{1\,;E}= 1$.
The main exception is $\|J_h\|_{\infty\,;E}\lesssim \int_{E} J_h w_E$, which is no longer true, and
instead of which we now employ
$|\int_{E} J_h w_E|\lesssim \|J_h\|_{\infty\,;E}$, which yields \eqref{eff_cor22}.
\end{proof}

\begin{remark}[\eqref{w_K} on anisotropic elements] \label{rem_w_K}
Consider a less standard \eqref{w_K} for $T_E^+$ (as $T_E^-$ is similar).
If $\hat h_{E}=|T^+||E|^{-1}$, then $T_E^+=T^+$ is shape-regular with $\hat h_{E}\simeq h_{T^+}$, so \eqref{w_K} is standard.
Otherwise, i.e. if $\hat h_{E}<|T^+||E|^{-1}$, set $\theta:=\hat h_{E}/\{|T^+||E|^{-1}\}\simeq \hat h_{E}/h_{T^+}$
and define an affine transformation from the shape-regular $T^+$ to the anisotropic $T^+_E$  by
$$
\hat x
=x-(P_{0}-\hat P_0)\,\lambda_{0}(x).
$$
Here $P_{0}$ and $\hat P_0$ are the vertices in $T^+$ and $T_E^+$ opposite to $E$, while $\lambda_0$ is the
barycenric coordinate in $T^+$ associated with $P_0$.
As $|T_E^+|=\theta |T^+|$,  the Jacobian determinant of this transformation equals $\theta$.
Additionally,  in the shape-regular $T^+$ one has
$|\nabla \lambda_0|\simeq h^{-1}_{T^+}$ and $|P_{0}-\hat P_0|\lesssim  h_{T^+}$,
so all elements in the transformation matrix are $\lesssim 1$.
Now, by Cramer’s rule, all entries of the inverse transformation matrix are  $\lesssim\theta^{-1}$, so
for a generic $v$ one gets
$|\nabla_{\hat  x} v|\lesssim \theta^{-1} |\nabla_{ x} v|$, which yields \eqref{w_K} after application of a standard inverse inequality for shape-regular elements.
\end{remark}


\subsection{{Seminorms} of the convective derivative and overall efficiency result}\label{ssec_norms}
Combining
the definition of $\eta_\infty(T)$ in \eqref{ind_def} with
 \eqref{eff_cor1} and \eqref{eff_cor2} yields in summary that
\begin{align}
\label{eq4}
\begin{aligned}
\eta_\infty(T)  & \lesssim \ell_h  \|e\|_{\infty\,;\omega_T}
+\max_{T' \subset \omega_T}|\va \cdot \nabla e|_{\ast\,;T'}
+\max_{E\subset\pt T}|\va \cdot \nabla e|_{*\,;E}
+{\rm osc}(\alpha_T R_h\,, \omega_T),
\end{aligned}
\end{align}
where
\begin{align}
\label{osc_def}
{\rm osc}(\alpha_T R_h\,, \omega_T):= \max_{T' \subset \omega_T} \| \alpha_{T'} (R_h-R_{h,T'})\|_{\infty\,;T'}\,.
\end{align}

Recall that here we used the elementwise seminorm definitions
\eqref{new_norm} and \eqref{new_norm2}:
\begin{align*}
|\va \cdot \nabla e|_{\ast\,;T}&:=
 \alpha_T\sup_{{\psi \in \mathbb{P}^{r-1}(T):}
{ \|\psi\|_{1\,;T}=1}} \left|\int_T b_T \psi\,\va\cdot\nabla e\right|, \\
|\va \cdot \nabla e|_{\ast\,;E}&:=
 \sup_{{\varphi \in \mathbb{P}^{r-1}(E):}
{ \|\varphi\|_{1\,;E}=1}}
\hat h_E^{-1}\left|\int_{T_{E}^+\cup T_{E}^-}\!\!\alpha_T\, b_E\, \varphi\, \va\cdot\nabla e\right|.
\end{align*}
These are seminorms because it is possible that $\va \cdot \nabla e \neq 0$ is orthogonal to $\mathbb{P}^{r+2n+1} \ni b_T \psi, b_E \varphi$ over the relevant volumes.
We now define the following related global seminorm of the convective derivative:
\begin{align}
\label{star_def}
 |\va \cdot \nabla e|_{\ast} &:= \max_{T \subset \T} |\va \cdot \nabla e|_{\ast\,;T}+\max_{E\subset\mathcal E}|\va \cdot \nabla e|_{*\,;E}\,.
\end{align}

From \eqref{eq4} and \eqref{osc_def} one immediately gets the global estimate
$$
\max_{T \in \T} \eta_\infty(T)\lesssim \ell_h  \|e\|_{\infty\,;\Omega}+
 |\va \cdot \nabla e|_{\ast}
+{\rm osc}(\alpha_T R_h\,, \Omega).
$$
On the other hand, combining \eqref{apost_upper} with
 \eqref{eff_cor11} and \eqref{eff_cor22}, one gets
 $ |\va \cdot \nabla e|_{\ast}
 \lesssim \ell_h  \|e\|_{\infty\,;\Omega}+\max_{T \in \T} \eta_\infty(T)$, and then
 $$
\|e\|_{\infty\,;\Omega}+ \ell_h^{-1}|\va \cdot \nabla e|_{\ast}
 \lesssim \max_{T \in \T} \eta_\infty(T).
 $$
Combining these relationships with ${\rm osc}(\alpha_T R_h, T) \le \eta_\infty(T)$, we have proved the following for the standard conforming finite element method.
\begin{theorem}\label{theo_main}
Under assumptions \eqref{G_bounds} on $G$, the error of the computed solution $u_h$ from \eqref{conf_fem} satisfies
\begin{align}
\notag
\|u-u_h\|_{\infty\,;\Omega}  + \ell_h^{-1}|&\va\cdot\nabla (u-u_h)|_{\ast} + {\rm osc}(\alpha_T R_h\,, \Omega)  \lesssim  \max_{T \in \T} \eta_\infty(T)
\\ & \lesssim \ell_h\|u-u_h\|_{\infty\,;\Omega}   +|\va\cdot\nabla (u-u_h)|_{\ast}  + {\rm osc}(\alpha_T R_h\,, \Omega).
\label{eq10}
\end{align}
\end{theorem}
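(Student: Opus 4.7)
The plan is to assemble Theorem \ref{theo_main} from the pieces already established in Sections \ref{ssec_rel}--\ref{ssec_norms}; there is no new analytic work to do, only a bookkeeping step that threads together reliability (Lemma \ref{lem_upper_apost}) with the two-sided elementwise bounds in Corollaries \ref{cor_eff1} and \ref{cor_eff2}. Accordingly, I would split the argument into the efficiency (right-hand) inequality and the reliability (left-hand) inequality of \eqref{eq10}.

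For the efficiency direction, the combined elementwise relation \eqref{eq4} is the key ingredient. Taking the maximum over $T\in\T$ on both sides of \eqref{eq4}, one replaces $\|e\|_{\infty\,;\omega_T}$ by the global $\|e\|_{\infty\,;\Omega}$, and replaces $\max_{T'\subset\omega_T}|\va\cdot\nabla e|_{\ast\,;T'}$ together with $\max_{E\subset\pt T}|\va\cdot\nabla e|_{\ast\,;E}$ by the global seminorm $|\va\cdot\nabla e|_{\ast}$ of \eqref{star_def}. The patchwise oscillation ${\rm osc}(\alpha_T R_h,\omega_T)$ is by definition dominated by ${\rm osc}(\alpha_T R_h,\Omega)$. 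This yields the second inequality of \eqref{eq10} directly.

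For the reliability direction, I would start from Lemma \ref{lem_upper_apost}, giving $\|u-u_h\|_{\infty\,;\Omega}\lesssim \max_T\eta_\infty(T)$. To control $|\va\cdot\nabla e|_{\ast}$, I apply \eqref{eff_cor11} and \eqref{eff_cor22} on each element and each interior face and take the maximum, obtaining
\[
|\va\cdot\nabla e|_{\ast}\lesssim \ell_h\|e\|_{\infty\,;\Omega}+\max_{T\in\T}\eta_\infty(T).
\]
Substituting the reliability bound for $\|e\|_{\infty\,;\Omega}$ into this and then dividing by $\ell_h$ gives $\ell_h^{-1}|\va\cdot\nabla e|_{\ast}\lesssim \max_T\eta_\infty(T)$. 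Finally, from the definition of $\eta_\infty(T)$ in \eqref{ind_def} and the stability of the local $L_2$-projection of $R_h$ onto $\mathbb{P}^{r-1}(T)$, one has $\alpha_T\|R_h-R_{h,T}\|_{\infty\,;T}\lesssim \alpha_T\|R_h\|_{\infty\,;T}\le \eta_\infty(T)$, so ${\rm osc}(\alpha_T R_h,\Omega)\lesssim \max_T\eta_\infty(T)$ as well. Adding the three contributions yields the first inequality of \eqref{eq10}.

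I do not anticipate any genuine obstacle: the theorem is essentially a corollary of the local results already established. The only subtle point, worth flagging in the write-up, is the asymmetric appearance of the logarithmic factor: $\ell_h^{-1}$ on the left of \eqref{eq10} is unavoidable because absorbing the term $\ell_h\|e\|_{\infty\,;\Omega}$ from \eqref{eff_cor11}--\eqref{eff_cor22} back into the reliability estimate costs one power of $\ell_h$, and $\ell_h$ on the right arises symmetrically from \eqref{eq4}. Thus there is a logarithmic gap between reliability and efficiency, entirely consistent with the factor $\ell_h$ inherited from the Green's function bounds \eqref{G_bounds}.
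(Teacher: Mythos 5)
Your proposal is correct and follows essentially the same route as the paper: the efficiency bound is obtained by maximizing the elementwise relation \eqref{eq4} over $\T$, and the reliability bound combines Lemma \ref{lem_upper_apost} with \eqref{eff_cor11}--\eqref{eff_cor22} (absorbing $\ell_h\|e\|_{\infty\,;\Omega}$ at the cost of the factor $\ell_h^{-1}$) and the elementary bound ${\rm osc}(\alpha_T R_h,\Omega)\lesssim\max_{T\in\T}\eta_\infty(T)$. Your remark on the $L_\infty$-stability of the local $L_2$-projection is a slightly more careful justification of the oscillation bound than the paper's one-line claim, but the argument is the same.
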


Thus while our original intention was to bound $\|u-u_h\|_{\infty\,;\Omega}$, the estimator that we have naturally derived is {\it not} efficient for this norm.  Deriving an upper bound for $\eta_\infty$ instead requires inclusion of the $|\cdot|_{\ast}$ {seminorm}.  Recall also that a similar situation is observed when bounding energy norms in singularly perturbed convection-diffusion problems, where a dual norm of the convective derivative plays a similar role in the analysis \cite{tob_Ver_2015}.  Our numerical experiments below also highlight the importance of the $|\cdot|_{\ast}$ {seminorm} in a posteriori analysis for convection-diffusion problems.

We finish this section with a further discussion of the $|\cdot|_\ast$ seminorm, as its definition involves multiple terms and its meaning may not be intuitively clear at first glance.
Consider first the simpler norm 
\begin{align}
\label{starstar_def}
|\va \cdot \nabla e|_{\ast\ast} & := \|\alpha_T\,\va\cdot\nabla e\|_{\infty\,;\Omega},
\end{align}
of the convective derivative, where $\alpha_T=\min\{1,\,\ell_{h} \eps^{-1}h_T^2\}\le 1$ is understood as an elementwise-defined piecewise-constant weight.  The above definitions easily yield that $|\va\cdot\nabla v|_{\ast} \lesssim |\va\cdot\nabla v|_{\ast\ast}$.  Thus our estimator measures the error in a mesh-dependent norm that lies between $\|e\|_{\infty\,;\Omega}$ and $\|e\|_{\infty\,;\Omega}+\|\alpha_T\,\va\cdot\nabla e\|_{\infty\,;\Omega}$.  The $|\cdot|_{\ast\ast}$ norm is still mesh-dependent, but gives a more transparent measure of the convective derivative of the error.  It is not generally true that $|\va\cdot\nabla v|_{\ast \ast} \lesssim |\va\cdot\nabla v|_{\ast}$.  It is straightforward to instead prove that $\alpha_T \|P_{r-1} (\va \cdot \nabla e)\|_{\infty \,; T} \lesssim |\va \cdot \nabla e|_{\ast \,;T}$ and thus that
\begin{align}
\label{norm_compare}
|\va \cdot \nabla e|_{\ast\ast} \lesssim {\max_{T \in \T} |\va \cdot \nabla e|_{\ast \,; T}}
+ \max_{T \in \T} \alpha_T \|\va \cdot \nabla e-P_{r-1} (\va \cdot \nabla e)\|_{\infty \,; T},
\end{align}
where $P_{r-1}$ is the $L_2$ projection onto the elementwise polynomials of degree $r-1$.  Thus the $\ast$-{seminorm} bounds the $\ast\ast$-norm only up to an oscillation term.  In fact, $|\va\cdot\nabla u|_{\ast \ast}$ and hence $|\va \cdot \nabla e|_{\ast \ast}$ may even be unbounded in cases where $|\va\cdot\nabla u|_\ast$ and $|\va \cdot \nabla e|_\ast$ are finite.  In particular, $|\va \cdot \nabla u|_{\ast \ast}<\infty$ requires $\va \cdot \nabla u \in L_\infty$ whereas $|\va \cdot \nabla u|_{\ast} <\infty$ requires only $\va \cdot \nabla u \in L_1$.  The latter is true but not the former for example when $\Omega$ is a nonconvex polygonal domain and thus $\nabla u$ is unbounded at reentrant corners.     In the latter case a minor modification of $|\cdot|_{\ast \ast}$ by inclusion of an appropriate local bubble again results in a finite quantity.  It is also possible to define other norms between $|\cdot|_\ast$ and $|\cdot|_{\ast \ast}$.  In doing so there appears to be a tradeoff between simplicity and transparency on the one hand and fidelity to actual estimator behavior on the other, with no obvious choice doing an outstanding job at both of these tasks.

In spite of its drawbacks the $\ast \ast$ norm is easily understood as an elementwise-weighted norm of the error, tracks the $\ast$-seminorm closely in some situations, and provides clear insight into the convergence behavior of the $\ast$ seminorm.  To illustrate the latter point, assume momentarily for simplicity that the mesh is quasi-uniform with diameter $h$,  ignore logarithmic factors, and assume that {$h^2 \lesssim \epsilon$.  Then $|\va \cdot \nabla e|_{\ast \ast} \simeq  \frac{h^2}{\epsilon} \|\va\cdot \nabla e\|_{\infty\,; \Omega}$. Now,}
  as $h \rightarrow 0$ and $h\lesssim\eps$,
  we get $|\va \cdot \nabla e|_{\ast \ast} \ll h \|\va \cdot \nabla e\|_{\infty\,;\Omega}$.  Typically the maximum norm converges with one power of $h$ faster than the $W_\infty^1$ norm, so we may roughly expect $|\va \cdot \nabla e|_{\ast \ast}$ to be equivalent to $\|e\|_{\infty\,;\Omega}$ when $h \simeq \epsilon$, and for the latter to dominate the former as $h/\epsilon \rightarrow 0$.

Alternatively we may integrate by parts \eqref{new_norm} and \eqref{new_norm2} to obtain
\begin{align}
\label{eq12}
\begin{aligned}
|\va \cdot \nabla e|_{\ast} & \lesssim \max_{T\in\T}\Bigl(\alpha_T h_T^{-1}+ \alpha_T \min\{\eps^{1/2},\, h_{T}\}^{-1}\Bigr)\, \|e\|_{\infty\,;{T}}
\\ &  \lesssim \ell_h \eps^{-1}\max_{T\in\T}\Bigl(\min\{\eps^{1/2},\, h_{T}\}\Bigr)\,  \|e\|_{\infty\,;{T}} \,.
\end{aligned}
\end{align}
Here we also used $\alpha_T\le\ell_h\eps^{-1}\min\{\eps,\,h_T^2\}$.  We thus see that up to log factors, $|\va \cdot \nabla e|_{\ast} \lesssim \|e\|_{\infty\,;\Omega}$ when $h_T \le \epsilon$ (as the latter also implies
$h_T \le \epsilon^{1/2}$).   In addition, $\frac{|\va \cdot \nabla e|_{\ast}}{\|e\|_{\infty\,;\Omega}} \rightarrow 0$ as $\max_{T \in \T} h_T \rightarrow 0$.   In other words, the extended norm in \eqref{eq10} is dominated by the maximum norm over areas of $\Omega$ where the local mesh size resolves $\epsilon$.  If the problem is not singularly perturbed (i.e. $\epsilon \simeq 1$), then this heuristic is valid on any mesh.}

Both of the preceding analyses indicate that $|\va \cdot \nabla e|_\ast$ may play an important role in understanding the behavior of the maximum-norm error estimator $\max_{T \in \T} \eta_\infty(T)$
when $h_T\gg\epsilon$, but diminishes in importance relative to $\|u-u_h\|_{\infty \,; \Omega}$ and $\max_{T \in \T} \eta_\infty(T)$ as $h_T$ resolves $\epsilon$.  We verify this behavior in our numerical experiments below, and additionally provide a computational comparison between $|\cdot|_\ast$ and $|\cdot|_{\ast\ast}$.

\section{A posteriori error estimation for stabilized methods}
\label{sec:stabilization}

In this section we explore stabilization schemes and their effects on the above a posteriori error estimates.  Stabilized methods frequently have the form:
Find $u_h \in S_h$ such that
\begin{align}
\label{stab_def}
\BB(u_h, v_h)+ S_\T(u_h, v_h) = \langle f, v_h\rangle\qquad\forall\,v_h \in S_h,
\end{align}
where the stabilization term  is described using $S_\T : S_h \times S_h \rightarrow \mathbb{R}$.

In order to develop a posteriori error estimates for stabilized schemes of the form \eqref{stab_def}, we
imitate the proof of
Lemma~\ref{lem_upper_apost} and use the Green's function
 $G(\cdot):=G(x, \cdot)$ and its interpolant $G_h\in S_h$.
 We again recall the error representation \eqref{er_via_green}
and subtract \eqref{stab_def} with $v_h:=G_h\in S_h$.
Then, with the notation $g:=G-G_h$, one gets
$$
(u_h-u)(x)=\epsilon \langle\nabla u_h,\, \nabla g\rangle + \langle \Div(\va u_h)+bu_h-f ,\, g\rangle
- S_\T(u_h, G_h),
$$
i.e. compared with \eqref{u_h_u_conform}, we have an additional term $S_\T(u_h, G_h)$.
Lemma~\ref{lem_upper_apost} for this case then reads as
\beq\label{apost_upper_stab}
\|u-u_h\|_{\infty\,;\Omega} \lesssim \max_{T \in \T} \eta_\infty(T) + \sup_{x \in \Omega} |S_\T(u_h, G_h)|.
\eeq
Bounds for the last term depend on the stabilization method; we explore some options below.
We also note that there is a large literature on stabilization methods which we do not explore here.
We shall somewhat follow \cite{tob_Ver_2015} in our presentation (where a similar analysis is carried out for the energy norm) and refer to that work for more discussion.
Hence, our exploration of this topic is
cursory and focused only on effects on maximum-norm a posteriori error estimation.   In particular, we establish that the a posteriori error estimation framework described above remains valid for a number of stabilized methods.

\subsection{Streamline diffusion method}
The streamline diffusion method is a residual-based method introduced in \cite{HB79}
{(see also \cite[\S{}III.3.2.1]{RStTob} and references therein)}.
  Here the stabilization term has the form
\beq\label{S_sdfem}
S_\T(u_h, v_h) = \sum_{T \in \T} \delta_T\!\int_T R_h\, \va \cdot \nabla v_h,
\eeq
where $R_h=-\eps\Delta u_h+\Div(\va u_h)+bu_h-f$
is the elementwise residual from \eqref{residual}.
Here $\delta_T\ge 0$ is a user-chosen parameter.
Note one standard choice \cite{BH82, JK07}
\begin{align}
\label{streamline_def}
\delta_T  =h_T a_T^{-1}\,\xi(\textstyle\frac12 \Blue{P\!e_T}),
\quad
\xi(s):=\coth(s)-s^{-1}\simeq \min\{1,s\},
\quad a_T:=\|\va\|_{\infty\,;T},
\end{align}
where we used the local  P\'{e}clet number $P\!e_T:=\eps^{-1}a_T h_T$.
Note also that the above $\delta_T$, as well as many other standard choices, satisfies the hypothesis of Corollary~\ref{cor_sdfem} below.

\begin{lemma}\label{lem_SDFEM}
Suppose $G$ satisfies \eqref{G_bounds}, and $G_h\in S_h$
is its interpolant from \eqref{G_h}.
 Then for \eqref{S_sdfem} one gets
\beq\label{s_sdfem_bound}
|S_\T(u_h, G_h)|\lesssim \max_{T\in\T}\bigl\{\gamma_T\delta_T \|R_h\|_{\infty\,;T}\bigr\},
\;\;
\gamma_T:=
\min\{a_Th_T^{-1},\,
\ell_h(1+a_T\eps^{-1}h_T)\}.
\eeq
\end{lemma}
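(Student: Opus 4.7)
My plan is to apply H\"older's inequality elementwise and then reduce the claim to a summation bound on $\|\va\cdot\nabla G_h\|_{1\,;T}$ weighted by $\gamma_T^{-1}$. Starting from the definition \eqref{S_sdfem},
\begin{equation*}
|S_\T(u_h, G_h)|\le \sum_{T\in\T}\delta_T\|R_h\|_{\infty\,;T}\|\va\cdot\nabla G_h\|_{1\,;T},
\end{equation*}
and factoring out $\max_{T\in\T}\{\gamma_T\delta_T\|R_h\|_{\infty\,;T}\}$ reduces the claim to showing
\begin{equation*}
\sum_{T\in\T}\gamma_T^{-1}\|\va\cdot\nabla G_h\|_{1\,;T}\lesssim 1.
\end{equation*}
Since $\gamma_T^{-1}=\max\{h_T/a_T,\,[\ell_h(1+a_T\eps^{-1}h_T)]^{-1}\}$ is bounded by the sum of its two arguments, I would split this into two sums and treat them separately.

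For the $h_T/a_T$ sum, I would use $\|\va\cdot\nabla G_h\|_{1\,;T}\le a_T\|\nabla G_h\|_{1\,;T}$ to cancel $a_T$. The key step is combining the $L_1$-stability of the Scott--Zhang interpolant (case $j=k=0$ in \eqref{G_h} gives $\|G_h\|_{1\,;T}\lesssim \|G\|_{1\,;\omega_T}$) with the standard inverse inequality $\|\nabla G_h\|_{1\,;T}\lesssim h_T^{-1}\|G_h\|_{1\,;T}$ for piecewise polynomials on shape-regular meshes. Together these give $h_T\|\nabla G_h\|_{1\,;T}\lesssim \|G\|_{1\,;\omega_T}$, and bounded patch overlap together with \eqref{green_L1} yields
\begin{equation*}
\sum_{T\in\T}(h_T/a_T)\|\va\cdot\nabla G_h\|_{1\,;T}\lesssim \|G\|_{1\,;\Omega}\lesssim 1.
\end{equation*}

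For the $[\ell_h(1+a_T\eps^{-1}h_T)]^{-1}$ sum, I would decompose $G_h=G-(G-G_h)$. The $\va\cdot\nabla G$ contribution is at most $\ell_h^{-1}\|\va\cdot\nabla G\|_{1\,;\Omega}\lesssim 1$ by \eqref{green_conv} and $\ell_h\gtrsim 1+|\ln\eps|$. For the approximation-error contribution, the elementary inequality $a_T/(1+a_T\eps^{-1}h_T)\le \eps/h_T$ reduces matters to bounding $(\eps/\ell_h)\sum_T h_T^{-1}\|\nabla(G-G_h)\|_{1\,;T}$. Following the strategy of Lemma~\ref{lem_upper_apost}, I would split $\T$ into the near-singularity set $\T_0$ (of bounded cardinality, with $h_T\simeq h_{T_0}$) and $\T\setminus\T_0$. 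On $\T_0$, Scott--Zhang gradient stability ($j=k=1$ in \eqref{G_h}) together with \eqref{green_locW11} applied to $\omega_T\subset B(x,\bar Ch_T)$ gives a per-element contribution $\lesssim \ell_h^{-1}$. On $\T\setminus\T_0$, the approximation estimate $\|\nabla(G-G_h)\|_{1\,;T}\lesssim h_T|G|_{2,1\,;\omega_T}$ reduces the sum to $(\eps/\ell_h)|G|_{2,1\,;\Omega\setminus B(x,ch_{T_0})}\lesssim 1$ via \eqref{green_W12}.

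The main obstacle is the $h_T/a_T$ sum: a direct application of the Scott--Zhang gradient stability $\|\nabla G_h\|_{1\,;T}\lesssim \|\nabla G\|_{1\,;\omega_T}$ combined with $|G|_{1,1\,;\Omega}\lesssim \eps^{-1/2}$ from \eqref{green_L1} would only yield $\eps^{-1/2}\max_T h_T$, which is not uniformly bounded as $\eps\to 0$. The inverse-inequality-plus-$L_1$-stability trick is the device that replaces the bad factor $|G|_{1,1\,;\Omega}$ by $\|G\|_{1\,;\Omega}\lesssim 1$ and makes the overall argument succeed.
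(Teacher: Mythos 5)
Your proposal is correct and follows essentially the same route as the paper: the same reduction to $\sum_{T\in\T}\gamma_T^{-1}\|\va\cdot\nabla G_h\|_{1\,;T}\lesssim 1$, the same inverse-inequality-plus-$L_1$-stability device for the $h_Ta_T^{-1}$ branch, the same decomposition $G_h=G-(G-G_h)$ with \eqref{green_conv} for the convective term, and the same $\T_0$ versus $\T\setminus\T_0$ splitting (reusing the ingredients of \eqref{I_bound_aux}) for the approximation-error term. No gaps.
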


\begin{corollary}\label{cor_sdfem}
Suppose that $u_h$ satisfies \eqref{stab_def},\,\eqref{S_sdfem} with
$\delta_T\lesssim h_T\min\{a_T^{-1},\,\eps^{-1}h_T\}$ $\forall\,T\in\T$.
Then, under the conditions of Lemma~\ref{lem_SDFEM},
one has
$|S_\T(u_h, G_h)|\lesssim \max_{T \in \T} \eta_\infty(T)$ for any $x \in \Omega$, and, hence,
the error bound \eqref{apost_upper}.
\end{corollary}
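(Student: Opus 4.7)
The proof is essentially a bookkeeping exercise: compare the bound on $|S_\T(u_h,G_h)|$ given by Lemma~\ref{lem_SDFEM} with the volume-residual part of $\eta_\infty(T)$, then plug the result into \eqref{apost_upper_stab}. The plan is therefore to show $\gamma_T\delta_T \lesssim \alpha_T$ under the hypothesis $\delta_T\lesssim h_T\min\{a_T^{-1},\eps^{-1}h_T\}$, and to invoke that $\alpha_T\|R_h\|_{\infty\,;T}$ appears inside $\eta_\infty(T)$ by definition \eqref{ind_def}.

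First I would recall from Lemma~\ref{lem_SDFEM} that $|S_\T(u_h,G_h)|\lesssim \max_{T\in\T}\gamma_T\delta_T\|R_h\|_{\infty\,;T}$, with $\gamma_T=\min\{a_Th_T^{-1},\,\ell_h(1+a_T\eps^{-1}h_T)\}$ and $\alpha_T=\min\{1,\,\ell_h\eps^{-1}h_T^2\}$. The hypothesis $\delta_T\lesssim h_T\min\{a_T^{-1},\eps^{-1}h_T\}$ gives the two simultaneous bounds $\delta_T\lesssim h_Ta_T^{-1}$ and $\delta_T\lesssim \eps^{-1}h_T^2$, and I would split the analysis according to the local P\'eclet number $P\!e_T=\eps^{-1}a_Th_T$.

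In the convection-dominated regime $P\!e_T\ge 1$ we have $1+a_T\eps^{-1}h_T\simeq a_T\eps^{-1}h_T$, so $\gamma_T\simeq\min\{a_Th_T^{-1},\ell_h a_T\eps^{-1}h_T\}$. Multiplying by $\delta_T\lesssim h_Ta_T^{-1}$ gives
\begin{equation*}
\gamma_T\delta_T\lesssim \min\{1,\,\ell_h\eps^{-1}h_T^2\}=\alpha_T.
\end{equation*}
In the diffusion-dominated regime $P\!e_T<1$ we have $1+a_T\eps^{-1}h_T\simeq 1$, hence $\gamma_T\simeq\min\{a_Th_T^{-1},\ell_h\}\le\ell_h$; combining with $\delta_T\lesssim \eps^{-1}h_T^2$ yields $\gamma_T\delta_T\lesssim \ell_h\eps^{-1}h_T^2$, while the alternative bound $\delta_T\lesssim h_Ta_T^{-1}$ together with $\gamma_T\le a_Th_T^{-1}$ yields $\gamma_T\delta_T\lesssim 1$. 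Taking the minimum of these two bounds recovers $\gamma_T\delta_T\lesssim\alpha_T$ in this regime as well.

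Having established $\gamma_T\delta_T\lesssim\alpha_T$ uniformly in $T$, Lemma~\ref{lem_SDFEM} then gives
\begin{equation*}
|S_\T(u_h,G_h)|\lesssim \max_{T\in\T}\alpha_T\|R_h\|_{\infty\,;T}\le \max_{T\in\T}\eta_\infty(T),
\end{equation*}
since $\alpha_T\|R_h\|_{\infty\,;T}$ is precisely the first summand in $\eta_\infty(T)$ in \eqref{ind_def}. Inserting this bound into \eqref{apost_upper_stab} concludes the proof of the error estimate \eqref{apost_upper}. There is no real obstacle here; the only mild subtlety is the case split on $P\!e_T$ needed to identify which entry of the minima defining $\gamma_T$ and $\delta_T$ is active, and then checking that the products always sit below $\alpha_T$.
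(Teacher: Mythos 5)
Your proposal is correct and follows essentially the same route as the paper: reduce via \eqref{apost_upper_stab} and Lemma~\ref{lem_SDFEM} to showing $\gamma_T\delta_T\lesssim\alpha_T$, then verify this from the structure of the minima. The only cosmetic difference is that you split cases on $P\!e_T$ where the paper proves the two bounds $\gamma_T\delta_T\lesssim 1$ and $\gamma_T\delta_T\lesssim\ell_h\eps^{-1}h_T^2$ simultaneously using the identity $\min\{1,s\}\max\{1,s\}=s$.
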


\begin{proof}
In view of \eqref{apost_upper_stab}, it suffices to establish the desired bound on $S_\T(u_h, G_h)$.
For the latter, a comparison of \eqref{s_sdfem_bound} with \eqref{ind_def} shows that it suffices to prove that $\gamma_T\delta_T\lesssim \alpha_T=\min\{1,\,\ell_{h} \eps^{-1}h_T^2\}$.
From $\gamma_T\lesssim a_Th_T^{-1}$ combined with
$\delta_T\lesssim h_Ta_T^{-1}$ one immediately gets $\gamma_T\delta_T\lesssim 1$, so it remains to prove that we also have
$\gamma_T\delta_T\lesssim\ell_{h} \eps^{-1}h_T^2$.
The latter follows by combining $\gamma_T\lesssim \ell_h(1+a_T\eps^{-1}h_T)\simeq \ell_h\max\{1,\,a_T\eps^{-1}h_T\}$
with $\delta_T\lesssim h_Ta_T^{-1} \min\{1,\,a_T\eps^{-1}h_T\}$
(in view of $\min\{1,s\}\max\{1,s\}=s$ $\forall\,s$).
\end{proof}

{\it Proof of Lemma~\ref{lem_SDFEM}.}
A comparison of the desired bound \eqref{s_sdfem_bound} with \eqref{S_sdfem} shows that
it suffices to prove that
$$
I^*:=\sum_{T\in\T}I^*_T:=\sum_{T\in\T}\gamma_T^{-1}\|\va \cdot \nabla G_h\|_{1\,;T}\lesssim 1.
$$
Note that here $\gamma_T^{-1}\simeq
h_T a_T^{-1}+
\ell_h^{-1}\min\{1,\eps h_T^{-1}a_T^{-1}\}$.
Hence, a calculation using $G_h=G-(G-G_h)$ leads to
 $$
I^*_T\lesssim
 h_T\| a_T^{-1}\va \cdot \nabla G_h\|_{1\,;T}
 +\ell_h^{-1}\|\va \cdot \nabla G\|_{1\,;T}
 +\ell_h^{-1}\eps h_T^{-1}\| a_T^{-1}\va \cdot \nabla( G-G_h)\|_{1\,;T}\,.
 $$
 Here $|a_T^{-1}\va|\le 1$. Additionally, for the first term, an inverse inequality applied elementwise yields
 $h_T\|  \nabla G_h\|_{1\,;T}\lesssim \|G_h\|_{1\,;T}\lesssim \|G\|_{1\,;\omega_T}$, where we also used \eqref{G_h}.
 For the final term, \eqref{G_h} implies
 $\| \nabla ( G-G_h)\|_{1\,;T}\lesssim \min\{\|\nabla G\|_{1\,;\omega_T},\,h_T\|D^2 G\|_{1\,;\omega_T}\}$.
Combining these observations, one now gets
 $$
 I^*\lesssim
 \| G\|_{1\,;\Omega}
 +\ell_h^{-1}\|\va \cdot \nabla G\|_{1\,;\Omega}
+ \ell_{h}^{-1}\eps
\Bigl(| G|_{2,1\,;\Omega\backslash B(x, c  h_{T_0})}
+\sum_{T\in \T_0}  h^{-1}_T \|\nabla G\|_{1\,;\omega_T}\Bigr),
 $$
where
 we again used $T_0\ni x$ and
 $\T_0=\{T\in\T :\omega_T\cap B(x, \Blue{c} h_{T_0})\neq\emptyset\}$.
 Most ingredients of the right-hand side have been estimated in \eqref{I_bound_aux}.
The remaining $\ell_h^{-1}\|\va \cdot \nabla G\|_{1\,;\Omega}$ is estimated using \eqref{green_conv},
which yields the desired bound $I^*\lesssim 1$. (Note that
 \eqref{green_conv} follows from \eqref{G_bounds}.)
\endproof

\subsection{Continuous interior penalty stabilization}

We next let  $u_h$ satisfy \eqref{stab_def} with the stabilizing term
{\cite{DD76, Burman_Hansbo_2004} (see also \cite[\S{}III.3.3.2]{RStTob}, \cite[\S2.2.4]{tob_Ver_2015}
and references therein)}
\beq\label{int_penalty}
S_\T (u_h, v_h) = \sum_{E \in \E} \tau_E\! \int_E \llbracket \va \cdot \nabla u_h \rrbracket \llbracket \va \cdot\nabla v_h \rrbracket.
\eeq
Here we used the standard notation $\llbracket \cdot \rrbracket$, which, for a generic scalar function $v$,
is defined by
$\llbracket v \rrbracket:=\llbracket v n_E \rrbracket$ on any $E\in\mathcal E$ using any fixed normal unit vector $n_E$
to $E$.
A user-chosen parameter $\tau_E$ typically satisfies
\beq\label{int_penalty_tau}
\tau_E \lesssim h_E^2.
\eeq

Following the analysis in \cite[Lemma~2.6]{tob_Ver_2015}, we restrict our consideration to the case of $\mathbb{P}^1$ elements
and, thus, get the following result.

\begin{lemma}\label{lem_int_pen}
Suppose that
$u_h$ satisfies \eqref{stab_def},\,\eqref{int_penalty},\, \eqref{int_penalty_tau}
with the space $S_h$ of $\mathbb{P}^1$ elements,
and that the functions $\va$, \Blue{$\Div \va$,} $b$, and $f$ in \eqref{eq1-1}  are continuous in $\Omega$.
Suppose also that  $G$ satisfies \eqref{G_bounds}, and $G_h\in S_h$
is its interpolant from \eqref{G_h}.
 Then for \eqref{int_penalty}, one gets
$|S_\T(u_h, G_h)|\lesssim \max_{T \in \T} \eta_\infty(T)$ for any $x \in \Omega$, and, hence,
the error bound \eqref{apost_upper}.
\end{lemma}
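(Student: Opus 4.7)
The plan is to adapt the reasoning of Lemmas~\ref{lem_upper_apost} and~\ref{lem_SDFEM} to the continuous interior penalty stabilization.  By \eqref{apost_upper_stab}, it suffices to show that $|S_\T(u_h, G_h)| \lesssim \max_{T\in\T}\eta_\infty(T)$ for any fixed $x\in\Omega$.  I would first apply a H\"older-type split edgewise,
\[
|S_\T(u_h, G_h)| \leq \sum_{E\in\E} \tau_E\,\|\llbracket \va\cdot\nabla u_h\rrbracket\|_{\infty\,;E}\,\|\llbracket \va\cdot\nabla G_h\rrbracket\|_{1\,;E},
\]
and then control each factor separately.

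For $\|\llbracket\va\cdot\nabla u_h\rrbracket\|_{\infty\,;E}$ I would exploit the $\mathbb{P}^1$ restriction: elementwise $\Delta u_h\equiv 0$, so $R_h=\Div(\va u_h)+bu_h-f$.  The continuity of $\va,\,\Div\va,\,b,\,f$ then makes the non-convective portion of $R_h$ continuous across each interior edge $E$, producing the pointwise identity $R_h^+-R_h^-=(\va\cdot n^+)\,\llbracket\nabla u_h\rrbracket$ on $E$.  Taking $\|\cdot\|_{\infty\,;E}$ gives $\|\llbracket\va\cdot\nabla u_h\rrbracket\|_{\infty\,;E} \lesssim \|R_h\|_{\infty\,;T^+\cup T^-}\lesssim \alpha_{T^\pm}^{-1}\max_T\eta_\infty(T)$ via the volume-residual part of $\eta_\infty(T)$; independently, $\|\llbracket\va\cdot\nabla u_h\rrbracket\|_{\infty\,;E}\lesssim \beta_E^{-1}\max_T\eta_\infty(T)$ via the jump-residual part.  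Together with $\tau_E\lesssim h_E^2$, these provide two candidate weights, of sizes $h_E^2\alpha_{T^\pm}^{-1}$ and $h_E^2\beta_E^{-1}$, allowing the more favourable choice to be made case by case.

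For $\|\llbracket\va\cdot\nabla G_h\rrbracket\|_{1\,;E}$, I would use that $\va\cdot\nabla G$ is continuous across every interior edge, given $x$ in the interior of its containing simplex $T_0$; hence $\llbracket\va\cdot\nabla G_h\rrbracket=\llbracket\va\cdot\nabla(G_h-G)\rrbracket$.  The scaled trace inequality combined with the Scott--Zhang bounds \eqref{G_h} (and $D^2 G_h=0$) gives $\|\nabla(G-G_h)\|_{1\,;E\cap\pt T}\lesssim \|D^2 G\|_{1\,;\omega_T}$ for $T$ away from $x$, and an alternative bound $\lesssim h_{T_0}^{-1}\|\nabla G\|_{1\,;\omega_{T_0}}$ for $T=T_0\ni x$ via an inverse trace on $G_h$ together with Scott--Zhang $W^{1,1}$-stability.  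I would then sum over $E$, splitting into those edges adjacent to $\T_0$ and the rest, applying \eqref{green_W12} in the far field and \eqref{green_locW11} in the near field, in direct analogy with the treatment of $I$ and $\II$ in the proof of Lemma~\ref{lem_upper_apost}.

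The main obstacle is to select the weight (between $h_E^2\alpha_{T^\pm}^{-1}$ and $h_E^2\beta_E^{-1}$) on each edge so that, once paired with the $L_1$-estimate of $\|\llbracket\va\cdot\nabla G_h\rrbracket\|_{1\,;E}$, the resulting sum is bounded uniformly in $\eps$ and in the mesh.  Unlike the SDFEM estimate of Lemma~\ref{lem_SDFEM}, which pairs a single volume residual with $\nabla G_h$, the CIP term couples edge jumps of both $u_h$ and $G_h$, so the extra factor $h_T^{-1}$ introduced by the trace inequality on the $G_h$-side must be absorbed by one of the residual weights.  This is why the $\mathbb{P}^1$ assumption and the continuity of $\va,\,\Div\va,\,b,\,f$ are essential: they yield the identity $\llbracket R_h\rrbracket=(\va\cdot n)\,\llbracket\nabla u_h\rrbracket$, which gives two complementary bounds on $\|\llbracket\va\cdot\nabla u_h\rrbracket\|_{\infty\,;E}$ and thus the flexibility needed to close the estimate using \eqref{G_bounds} across the different regimes of $h_T$ versus $\eps$.
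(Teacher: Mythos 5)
Your overall route is the paper's: reduce to bounding $|S_\T(u_h,G_h)|$ via \eqref{apost_upper_stab}, use the $\mathbb{P}^1$ assumption and the continuity of $\va$, $\Div\va$, $b$, $f$ to get the key identity $\llbracket \va\cdot\nabla u_h\rrbracket=\llbracket R_h\rrbracket$ so that $\|\llbracket \va\cdot\nabla u_h\rrbracket\|_{\infty\,;E}\lesssim\|R_h\|_{\infty\,;T^+\cup T^-}\lesssim \alpha_T^{-1}\max_T\eta_\infty(T)$, and then show that $\sum_{T}\alpha_T^{-1}h_T^2\|\llbracket\va\cdot\nabla G_h\rrbracket\|_{1\,;\pt T}\lesssim 1$. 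Up to that point the proposal is sound.

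The gap is in how you propose to close that last sum, and you flag it yourself as an unresolved ``obstacle.'' Your toolbox for the Green's-function jump contains only two bounds: $\|\llbracket\nabla(G-G_h)\rrbracket\|_{1\,;\pt T}\lesssim\|D^2G\|_{1\,;\omega_T}$ in the far field and $\lesssim h_{T}^{-1}\|\nabla G\|_{1\,;\omega_T}$ near $x$. Since $\alpha_T^{-1}\simeq 1+\ell_h^{-1}\eps h_T^{-2}$, the term $\ell_h^{-1}\eps\,h_T^2\cdot h_T^{-2}\|D^2G\|_{1\,;\omega_T}$ sums correctly via \eqref{green_W12}, but the ``$1$'' part of $\alpha_T^{-1}$ produces $\sum_T h_T^2\|D^2G\|_{1\,;\omega_T}\lesssim (\max_T h_T^2)\,\eps^{-1}\ell_h$, which is unbounded precisely in the unresolved regime $h_T^2\gtrsim\eps$ where $\alpha_T=1$. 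Switching to the weight $h_E^2\beta_E^{-1}$ coming from the jump-residual part of $\eta_\infty$ does not repair this (there $\beta_E^{-1}=\eps^{-1/2}$, which is worse); that alternative is a dead end and is not used in the paper. What is actually needed is a third bound on the Green's-function side: by a trace inverse inequality followed by a gradient inverse inequality on the piecewise-linear $G_h$ and the $L_1$-stability in \eqref{G_h}, one has $h_T^2\|\llbracket\va\cdot\nabla G_h\rrbracket\|_{1\,;\pt T}\lesssim h_T\|\nabla G_h\|_{1\,;\omega_T}\lesssim\|G_h\|_{1\,;\omega_T}\lesssim\|G\|_{1\,;\omega_T'}$, giving altogether
\begin{equation*}
h_T^2\|\llbracket\va\cdot\nabla G_h\rrbracket\|_{1\,;\pt T}\lesssim
\min\bigl\{\|G\|_{1\,;\omega_T'},\,h_T|G|_{1,1\,;\omega_T'},\,h_T^2|G|_{2,1\,;\omega_T'}\bigr\}.
\end{equation*}
The $\|G\|_{1\,;\omega_T'}$ option is paired with the ``$1$'' part of $\alpha_T^{-1}$ and sums to $\|G\|_{1\,;\Omega}\lesssim 1$ by \eqref{green_L1}; the other two options handle the $\ell_h^{-1}\eps h_T^{-2}$ part exactly as in \eqref{I_bound_aux}. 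With this extra bound your argument closes; without it, it does not.
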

\begin{proof}
In view of \eqref{apost_upper_stab}, it suffices to establish the desired bound on $S_\T(u_h, G_h)$.
As we consider the case  of $\mathbb{P}^1$ elements,
 $\epsilon \Delta u_h=0$ elementwise for any $u_h \in S_h$.
 Hence, the residual
becomes
$R_h=\Div(\va u_h)+bu_h-f$, while, in view of the continuity of $\va$, \Blue{$\Div \va$,} $b$, and $f$, one then gets
$\llbracket \va \cdot \nabla u_h \rrbracket = \llbracket R_h \rrbracket$.
Hence, \eqref{int_penalty} leads to
$$
|S_\T (u_h, G_h)|
\lesssim I^{**}\max_{T\in\T}\Bigl\{\alpha_T\| R_h\|_{\infty\,;T}\Bigr\},\quad
I^{**}:=\sum_{T\in\T}
\alpha_T^{-1}h_T^2\|\llbracket  \va \cdot  \nabla G_h \rrbracket\|_{1\,;\pt T}\,,
$$
where we also used that $h_E\simeq h_T$ for any shape-regular $T$ sharing a face $E$.
As $\alpha_T\| R_h\|_{\infty\,;T}\le \eta_\infty(T)$ (in view of \eqref{ind_def}),
it remains to show that $I^{**}\lesssim 1$.

For the latter, first, note that an inverse inequality
yields $h_T^2\|\llbracket \nabla G_h \rrbracket\|_{1\,;\pt T}\lesssim
h_T^{j} | G_h|_{j,1\,;\omega_T}$ for $j=0,1$,
while
\eqref{G_h} implies
 $ | G_h|_{j,1\,;T}\lesssim  | G|_{j,1\,;\omega_T}$.
On the other hand, $\llbracket \nabla G_h \rrbracket=-\llbracket \nabla g \rrbracket$, where $g=G-G_h$,
so a standard scaled trace inequality yields
$\|\llbracket \nabla g \rrbracket\|_{1\,;\pt T}\lesssim \|D^2 g\|_{1\,;\omega_T}+h_T^{-1}\|\nabla g\|_{1\,;\omega_T}$,
for which \eqref{G_h} gives $\|D^2 g\|_{1\,;T}+h_T^{-1}\|\nabla g\|_{1\,;T}\lesssim \|G\|_{2,1\,;\omega_T}$.
Combining these observations, one gets
\beq\label{aux_int_pen}
h_T^2\|\llbracket {\va\cdot}\nabla G_h \rrbracket\|_{1\,;\pt T}\lesssim
\min\Bigl\{\| G\|_{1\,;\omega_T'},\,h_T| G|_{1,1\,;\omega_T'},\,\,h_T^2|G|_{2,1\,;\omega_T'}\Bigr\},
\eeq
where $\omega_T'$ denotes
the patch of elements in $\T$ touching $\omega_T$ (including those in $\omega_T$).
Finally, combining the definition of $I^{**}$ with \eqref{aux_int_pen} and
$\alpha_T^{-1}\simeq 1+\ell_{h}^{-1} \eps h_T^{-2}$, one gets
$$
I^{**}\lesssim \|G\|_{1\,;\Omega}+\ell_{h}^{-1} \eps
\Bigl( |G|_{2,1\,;\Omega\backslash B(x, ch_{T_0})}
+\sum_{T\in\T'_0}h_T^{-1}\|\nabla G\|_{1\,;\omega'_T}
\Bigr),
$$
where
 $\T'_0:=\{T\in\T :\omega'_T\cap B(x, ch_{T_0})\neq\emptyset\}$, with $T_0\ni x$.
Now, the desired bound $I^{**}\lesssim 1$ is obtained similarly to \eqref{I_bound_aux}.
\end{proof}

\subsection{Local projection stabilization}
In this section, we shall discuss local projection stabilization methods (and, very briefly, somewhat related subgrid-scale schemes).
We shall see that for such methods one can choose the Green's function's interpolant $G_h\in S_h$ such that
$S_\T(u_h, G_h)=0$ in
\eqref{apost_upper_stab}, which immediately yields the a posteriori error  \eqref{apost_upper} and, hence,
a more general \eqref{eq10}.

We shall mainly focus on
 local projection stabilization methods of the form \eqref{stab_def}{---see, e.g., \cite[\S{}III.3.3.1]{RStTob}, \cite[\S2.2.2]{tob_Ver_2015} for
 further details and references therein---}%
 with the stabilizing term
\beq\label{S_loc_pr}
S_\T(u_h, v_h) = \sum_{M \in \mathcal M} \delta_M\!\int_M\kappa_h( \bar{\va}_{M} \cdot \nabla u_h)\,\kappa_h( \bar{\va}_{M} \cdot \nabla v_h).
\eeq
This stabilizing term uses fluctuations of the convective derivatives computed using the fluctuation operator $\kappa_h:=I-\pi_h$,
where $\pi_h$ is a projection onto an appropriate discontinuous finite element space related to an auxiliary partition
$\mathcal M$
of $\Omega$.
The approximation $\bar{\va}_{M}$ of $\va$ is assumed constant in each $M\in\mathcal M$.
 A user-chosen parameter $\delta_M$ in \eqref{S_loc_pr} typically satisfies
$$
\delta_M\simeq h_M\,\|\va\|^{-1}_{\infty\,;M}\,.
$$

Both ${\mathcal M}=\T$ (one-level approach) and $\T$ generated by a single-level refinement of each element in $\mathcal M$ (two-level approach) have been introduced in the literature
{\cite{BB01,BB04,Matthies_etal_2007}}.
This can be implemented in various ways. To be more precise,
 it will be convenient to denote by
 $S_h^r $  the set of functions which are continuous on $\Omega$, equal to $0$ on $\partial \Omega$, and polynomials of degree at most $r$ on each $T \in \T$, where $r \ge 1$ is a fixed polynomial degree.
 An analogous set of functions relative to the partition $\mathcal M$ will be denoted $S_H^r $.
 With this notation, we assume that the finite element space $S_h$ and the fluctuation operator $\kappa_h$ satisfy
 \beq\label{S_h_1r}
 S_h^1\subseteq S_h\subseteq S_h^r\quad\mbox{for some~~}r\ge 1,
 \qquad
 \kappa_h( \nabla v_H)=0\quad\forall\, v_H\in S_H^1\,.
 \eeq
The above assumption on $\kappa_h$ is satisfied if $\pi_h v$ in each $M\in\mathcal M$ is defined as the $L_2$ projection of $v$ onto $\mathbb{P}^{q}(M)$
for some $q\ge 0$.
In the two-level case,  one may set $S_h:=S_h^r$ and $q:=r-1$.
In the one-level case ${\mathcal M}=\T$, for some $q\ge 0$,
the space $S^{q+1}_h$ 
can be enriched by appropriate bubble basis functions, so one gets \eqref{S_h_1r} with
{$r:=q+n+1$.}
For example, if $q=0$, one may employ $S_h=S_h^1\bigoplus {\rm span}\{b_T: T\in \T\}$,
where $b_T={\prod}_{i=1}^{n+1}\lambda_i {\in \mathbb{P}^{n+1}(T)}$ is the bubble function associated with $T$, {so \eqref{S_h_1r} is satisfied with $r=n+1$}.

For both one-level and two-level approaches under the general assumption \eqref{S_h_1r}, we get the following result.

\begin{lemma}\label{loc_pr}
Suppose that
$u_h$ satisfies \eqref{stab_def},\,\eqref{S_loc_pr} under assumption \eqref{S_h_1r}.
Then, under the conditions \eqref{G_bounds} on $G$,
 the error of the computed solution $u_h$ satisfies
\eqref{eq10},
i.e. Theorem~\ref{theo_main} remains valid for this method.
\end{lemma}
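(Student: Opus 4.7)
The plan is to choose the Green's function interpolant $G_h$ so that $S_\T(u_h, G_h)=0$, after which the reliability argument of Lemma~\ref{lem_upper_apost} and the efficiency bounds of Corollaries~\ref{cor_eff1}--\ref{cor_eff2} carry over with essentially no change. Concretely, I would let $G_h$ be the Scott--Zhang interpolant of $G(x,\cdot)$ onto $S_H^1$, i.e.\ onto the continuous piecewise-linears with respect to the auxiliary partition $\mathcal M$. The inclusion $S_H^1\subseteq S_h$ from \eqref{S_h_1r} guarantees that $G_h$ is a legitimate discrete test function in \eqref{stab_def}, so the identity \eqref{apost_upper_stab} applies with this choice.

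The key observation is that the second half of \eqref{S_h_1r} forces this $G_h$ to be invisible to the stabilization. Since $G_h\in S_H^1$, the gradient $\nabla G_h$ is piecewise constant on $\mathcal M$ and satisfies $\kappa_h(\nabla G_h)=0$ componentwise. Because $\bar{\va}_M$ is constant on each $M\in\mathcal M$, linearity of $\kappa_h$ gives
\[
\kappa_h(\bar{\va}_M\cdot\nabla G_h)=\bar{\va}_M\cdot\kappa_h(\nabla G_h)=0 \qquad\text{on every } M\in\mathcal M,
\]
so every summand in \eqref{S_loc_pr} vanishes and $S_\T(u_h,G_h)=0$. Plugging this into \eqref{apost_upper_stab} recovers the unstabilized reliability bound \eqref{apost_upper}, provided the rest of the argument from Lemma~\ref{lem_upper_apost} still applies.

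That is the point where I would pay closest attention: the proof of Lemma~\ref{lem_upper_apost} needs the Scott--Zhang estimate \eqref{G_h} in terms of the fine patch $\omega_T$, whereas the $G_h$ I am now using is defined through the coarse partition $\mathcal M$. In the one-level case $\mathcal M=\T$ there is nothing to check. In the two-level case every $T\in\T$ is a single shape-regular refinement of some $M\in\mathcal M$, hence $h_T\simeq h_M$ and the fine patch $\omega_T$ is contained in a bounded enlargement of the coarse patch $\omega_M$. The native interpolation bound $|G-G_h|_{k,1\,;M}\lesssim h_M^{j-k}|G|_{j,1\,;\omega_M}$ therefore implies \eqref{G_h} with constants depending only on shape regularity and the refinement ratio, and the estimates for $I$ and $\II$ in \eqref{err_int III} go through without modification, since only a bounded number of coarse elements $M$ meet $B(x,ch_{T_0})$.

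Finally, efficiency, i.e.\ the second inequality in \eqref{eq10}, is inherited for free. Corollaries~\ref{cor_eff1} and~\ref{cor_eff2} bound $\alpha_T\|R_h\|_{\infty\,;T}$ and $\beta_E\|\llbracket\nabla u_h\rrbracket\|_{\infty\,;E}$ purely in terms of $\|e\|_{\infty}$, the seminorms $|\va\cdot\nabla e|_{\ast\,;T}$ and $|\va\cdot\nabla e|_{\ast\,;E}$, and a data-oscillation term, via elementwise testing with bubble-supported polynomials; they use only the identity $R_h=\eps\Delta e-\Div(\va e)-be$ and $\llbracket\nabla u_h\rrbracket=-\llbracket\nabla e\rrbracket$, neither of which depends on how $u_h$ was produced. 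Combining the stabilized reliability bound with these unchanged lower bounds, exactly as in Section~\ref{ssec_norms}, yields \eqref{eq10} for $u_h$ solving \eqref{stab_def}--\eqref{S_loc_pr}.
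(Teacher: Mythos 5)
Your proposal is correct and follows essentially the same route as the paper: interpolate $G$ onto $S_H^1$ relative to $\mathcal M$ so that \eqref{S_h_1r} kills the stabilization term, note that the reliability argument survives with coarse patches replacing fine ones, and observe that the efficiency bounds never used Galerkin orthogonality. Your explicit verification that $\kappa_h(\bar{\va}_M\cdot\nabla G_h)=\bar{\va}_M\cdot\kappa_h(\nabla G_h)=0$ and your handling of the two-level patch issue are just slightly more detailed versions of steps the paper states tersely.
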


\begin{proof}
In view of \eqref{S_h_1r}, one has
 $u_h\in S_h^r$,
 so the efficiency results of \S\S\ref{ssec_eff_1}--\ref{ssec_eff_2}
 apply immediately. Hence, to establish \eqref{eq10}, it suffices to prove \eqref{apost_upper}.

 Next, construct the interpolant $G_h\in S_H^1\subseteq S_h^1$ of $G$ exactly as described in \S\ref{ssec_S_Gh}, only relative to the partition $\mathcal M$. Then the bounds \eqref{G_h} hold true, only
 with
$\omega_T$ now denoting the patch
of elements in $\mathcal M$ touching $M\supseteq T$ (which also includes this $M$).
With this tweak in the notation, the estimates in \S\ref{ssec_rel} remain valid, so
Lemma~\ref{lem_upper_apost} for this case again reads as
\eqref{apost_upper_stab}.
Finally, $G_h\in S_H^1$ combined with \eqref{S_h_1r} yields $S_\T(u_h, G_h)=0$, which, combined with \eqref{apost_upper_stab},
gives the desired \eqref{apost_upper}.
\end{proof}

Note that Lemma~\ref{loc_pr} also remains valid for a version of \eqref{S_loc_pr} with the fluctuations of the full gradient,
i.e. with $\kappa_h( \bar{\va}_{M} \cdot \nabla \cdots)$ replaced by $\kappa_h(\nabla \cdots)$ (as in this case we again enjoy $S_\T(u_h, G_h)=0$ for $G_h\in S_H^1$).

In addition, the above argument may be applied to subgrid-scale methods, in which gradients of fluctuations are used instead of
 fluctuations of gradients as in \eqref{S_loc_pr} for local projection methods \cite{Gu99,Gu01};
 {see also, e.g., \cite[\S5]{Matthies_etal_2007}, \cite[\S{}IV.4.5]{RStTob}, \cite[\S2.2.3]{tob_Ver_2015}.} For example, one may replace
the terms of type   $\kappa_h( \bar{\va}_{M} \cdot \nabla \cdots)$
in \eqref{S_loc_pr} by
 $ {\va}_{M} \cdot \nabla(\tilde\kappa_h \cdots)$ (or the full-gradient version  $ \nabla(\tilde\kappa_h \cdots)$). A typical fluctuation operator $\tilde\kappa_h$ satisfies $\ker \tilde\kappa_h\supseteq S_H^1$ (so $\tilde\kappa_h G_h=0$), in which case we again get Lemma~\ref{loc_pr}.

\subsection{Concluding remarks on stabilized methods}

Above we have established that the introduction of a variety of stabilization techniques does not affect the ability to bound $\|u-u_h\|_{L_\infty(\Omega)}$ using our residual estimator, although the form of the proof depends on the particular stabilization technique.  The rest of our arguments concerning the seminorm $|\va \cdot \nabla e|_{\ast}$ and efficiency of \Blue{our}  estimators are generally not affected by the introduction of stabilization, since they do not use Galerkin orthogonality in their proof.  The only exception comes in the choice of $r$ used to define oscillation and $|\cdot|_\ast$, which as noted in the preceding subsection may require a little bit of care when employing projection methods with bubble functions in the definition of $S_h$.  We summarize these findings below.

\begin{theorem}
Consider streamline diffusion stabilization under the assumptions of Corollary \ref{cor_sdfem}, continuous interior penalty stabilization under the assumptions of Lemma \ref{lem_int_pen}, or local projection stabilization under the assumptions of Lemma \ref{loc_pr}.  Under these conditions and the assumptions of Theorem \ref{theo_main}, the conclusions of Theorem \ref{theo_main} remain valid.  That is, the estimator $\max_{T \in \T} \eta_\infty(T)$ is reliable and efficient for the error notion $\|u-u_h\|_{\infty \,; \Omega} + |\va\cdot\nabla(u-u_h)|_\ast + {\rm osc}(\alpha_T R_h, \Omega)$ up to factors of $\ell_h$.
\end{theorem}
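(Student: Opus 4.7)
The statement is essentially a packaging result: the three preceding stabilization-specific lemmas already contain the hard estimates, and what remains is to combine them with the conforming-case framework of Theorem~\ref{theo_main}. The plan is therefore to split the argument into the reliability direction and the efficiency direction, treating them separately and uniformly across the three stabilization schemes.

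For reliability, I would begin from the perturbed error representation \eqref{apost_upper_stab}, which isolates the stabilization consistency error as the additional term $\sup_{x\in\Omega}|S_\T(u_h,G_h)|$. For each scheme the required bound $|S_\T(u_h,G_h)|\lesssim \max_{T\in\T}\eta_\infty(T)$ is already in hand: for streamline diffusion this is Corollary~\ref{cor_sdfem} (whose proof uses Lemma~\ref{lem_SDFEM} together with the admissible-parameter restriction on $\delta_T$); for continuous interior penalty it is Lemma~\ref{lem_int_pen}; and for local projection stabilization the proof of Lemma~\ref{loc_pr} even gives the sharper $S_\T(u_h,G_h)=0$ by choosing the Scott--Zhang interpolant on the coarser partition $\mathcal{M}$ so that $G_h\in S_H^1\subseteq\ker(\kappa_h\circ\nabla)$. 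Plugging any of these three bounds into \eqref{apost_upper_stab} gives $\|u-u_h\|_{\infty\,;\Omega}\lesssim \max_{T\in\T}\eta_\infty(T)$, which is the reliability half of \eqref{eq10}.

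For efficiency, the key observation is that the bubble-function arguments in the proofs of Lemma~\ref{lem_eff1}, Lemma~\ref{lem_eff2} and their Corollaries~\ref{cor_eff1}, \ref{cor_eff2} are purely local: they manipulate the elementwise residual $R_h$, the jump $\llbracket\nabla u_h\rrbracket$, and integrations by parts against compactly supported test functions, and at no point do they invoke the Galerkin identity. Consequently, they hold verbatim for \emph{any} $u_h\in S_h$, including the solution of any stabilized scheme \eqref{stab_def}. Assembling the elementwise and facewise efficiency bounds as in \S\ref{ssec_norms} then yields
\[
\max_{T\in\T}\eta_\infty(T)\lesssim \ell_h\|u-u_h\|_{\infty\,;\Omega}+|\va\cdot\nabla(u-u_h)|_\ast+{\rm osc}(\alpha_T R_h,\Omega),
\]
which is the efficiency half of \eqref{eq10}.

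The only point requiring a little care is the polynomial degree $r$ that enters the definitions of $R_{h,T}$, $|\cdot|_{\ast\,;T}$, $|\cdot|_{\ast\,;E}$ and ${\rm osc}$. For streamline diffusion and continuous interior penalty one has $S_h=S_h^r$ in the sense of \S\ref{ssec_S_Gh} and the same $r$ carries through. For local projection stabilization with bubble enrichment the space satisfies $S_h^{q+1}\subseteq S_h\subseteq S_h^{q+n+1}$ by \eqref{S_h_1r}, and one simply takes $r=q+n+1$ in the estimator and oscillation definitions; since the efficiency proofs depend on $r$ only through $\mathbb{P}^{r-1}$-projections, no new estimate is required. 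Conceptually the hard work is upstream in Lemma~\ref{lem_SDFEM} and Lemma~\ref{lem_int_pen}, where the Green's function bounds \eqref{G_bounds} and \eqref{green_conv} are invoked to control $S_\T(u_h,G_h)$ in terms of $\eta_\infty$; the present theorem itself is essentially a bookkeeping assembly of those results with the stabilization-free efficiency estimates.
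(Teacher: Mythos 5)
Your proposal is correct and follows essentially the same route as the paper: the paper likewise treats this theorem as an assembly of the stabilization-specific reliability bounds (Corollary~\ref{cor_sdfem}, Lemma~\ref{lem_int_pen}, Lemma~\ref{loc_pr} via \eqref{apost_upper_stab}) with the efficiency estimates of \S\ref{ssec_eff_1}--\ref{ssec_norms}, which carry over unchanged because they never invoke Galerkin orthogonality. You also correctly identify the one point the paper flags as needing care, namely the polynomial degree $r$ entering ${\rm osc}$ and $|\cdot|_\ast$ for local projection schemes with bubble enrichment.
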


\section{Numerical experiments}
\label{sec:numerics}

In this section we present numerical experiments which illustrate the practical behavior of the error estimators and indicators defined above.  All computations were carried out in MATLAB using suitably modified routines from  the adaptive finite element library iFEM \cite{Ch09PP}.

In all cases we took $\Omega$ to be the unit square $(0,1) \times (0,1)$ and
{the coefficients ${\bf a}=[0,1]$ and} $b=1$.
We computed using affine Lagrange elements and either uniform or adaptive refinement.  In the case of adaptive refinement, we used a modified maximum strategy in the standard $\mathtt{solve} \rightarrow \mathtt{estimate} \rightarrow \mathtt{mark} \rightarrow  \mathtt{refine}$ loop.  Let $\eta_\infty=\max_{T \in \T} \eta_\infty(T)$ be the overall error estimator, and fix a (small) positive integer $K_{max}$.  An element $T \in \T$ is bisected $K_{max}$ times if $\eta_\infty(T) \ge 0.5 \eta_\infty$, $K_{max}-1$ times if $0.5 \eta_\infty > \eta_\infty(T) \ge 0.25 \eta_\infty$, etc.  $K_{\max}$ can be varied based on the degree of singular perturbation, with $K_{\max}=4$ being used in the experiments below.  This scheme helped to prevent too few elements being refined at each iteration of the adaptive procedure, and thus too many iterations from occurring.  It also aided in more efficient resolution of boundary and interior layers since elements with large indicators are subdivided multiple times in each adaptive step.

Unknown constants appear in our error estimators and must be fixed.  In our experiments we chose the definition
$$\begin{aligned}
\eta_\infty(T)& =\min  \left [1,\, 0.0125 \ell_h \frac{h_T^2}{\epsilon} \right ] \|R_T \|_{L_\infty(T)}
\\ & ~~~~+ \min \left [ \sqrt{\epsilon},\, 0.03 \ell_h  h_T \right ] \|\llbracket \nabla u_h \rrbracket \|_{L_\infty(\partial T)}.
\end{aligned}$$
Experiments were conducted using either an unstabilized scheme or streamline diffusion stabilization with the parameter chosen as in \eqref{streamline_def}.

\subsection{Experiment 1: Smooth solution}
In this experiment we consider a simple smooth solution
$$u_1(x,y)=\sin (\pi x) \sin (\pi y).
$$
No stabilization was used.  Our goal here is to illustrate and compare the convergence orders of the a posteriori error estimator $\max_{T \in \T} \eta_\infty(T)$, the target error norm $\|u-u_h\|_{L_\infty(\Omega)}$, and the convective error $|\va\cdot\nabla(u-u_h)|_{*}$.  We thus take a uniform series of mesh refinements, and carry out convergence studies with $\epsilon=3 \times 10^{-3}$ and $\epsilon = 10^{-5}$.   Results are displayed in Figure \ref{fig2}.   In both cases we observe that $\|u-u_h\|_{L_\infty(\Omega)}$ converges with order $DOF^{-1}=O(h^2)$, and with the same order of magnitude observed in each case.  When $\epsilon=3 \times 10^{-3}$, we observe a preasymptotic regime in which the error estimator and convective error $|\va\cdot\nabla(u-u_h)|_{*}$ both converge with order $DOF^{-3/2}=h^3$.  As $h$ sufficiently resolves $\epsilon$, the estimator instead tracks the error $\|u-u_h\|_{L_\infty(\Omega)}$ with order $DOF^{-1}$, while the convective error measured in the $\ast$-seminorm continues to decrease with order $DOF^{-3/2}$.  In all regimes, this illustrates that $\max_{T \in \T}\eta_\infty(T) \simeq \|u-u_h\|_{L_\infty(\Omega)} + |\va\cdot\nabla(u-u_h)|_\ast$ (up to data oscillation, which here is bounded by the latter two terms).  In addition it highlights two different dominant convergence rates for the estimator.  A third initial regime of $O(DOF^{-1/2})=O(h)$ convergence for the estimator and $|\va\cdot\nabla (u-u_h)|_{\ast}$ is illustrated in the right plot in Figure \ref{fig2}, where $\epsilon=10^{-5}$.

\setlength{\unitlength}{.8cm}
\begin{figure}[t]
\begin{center}
\includegraphics[width=2.8in]{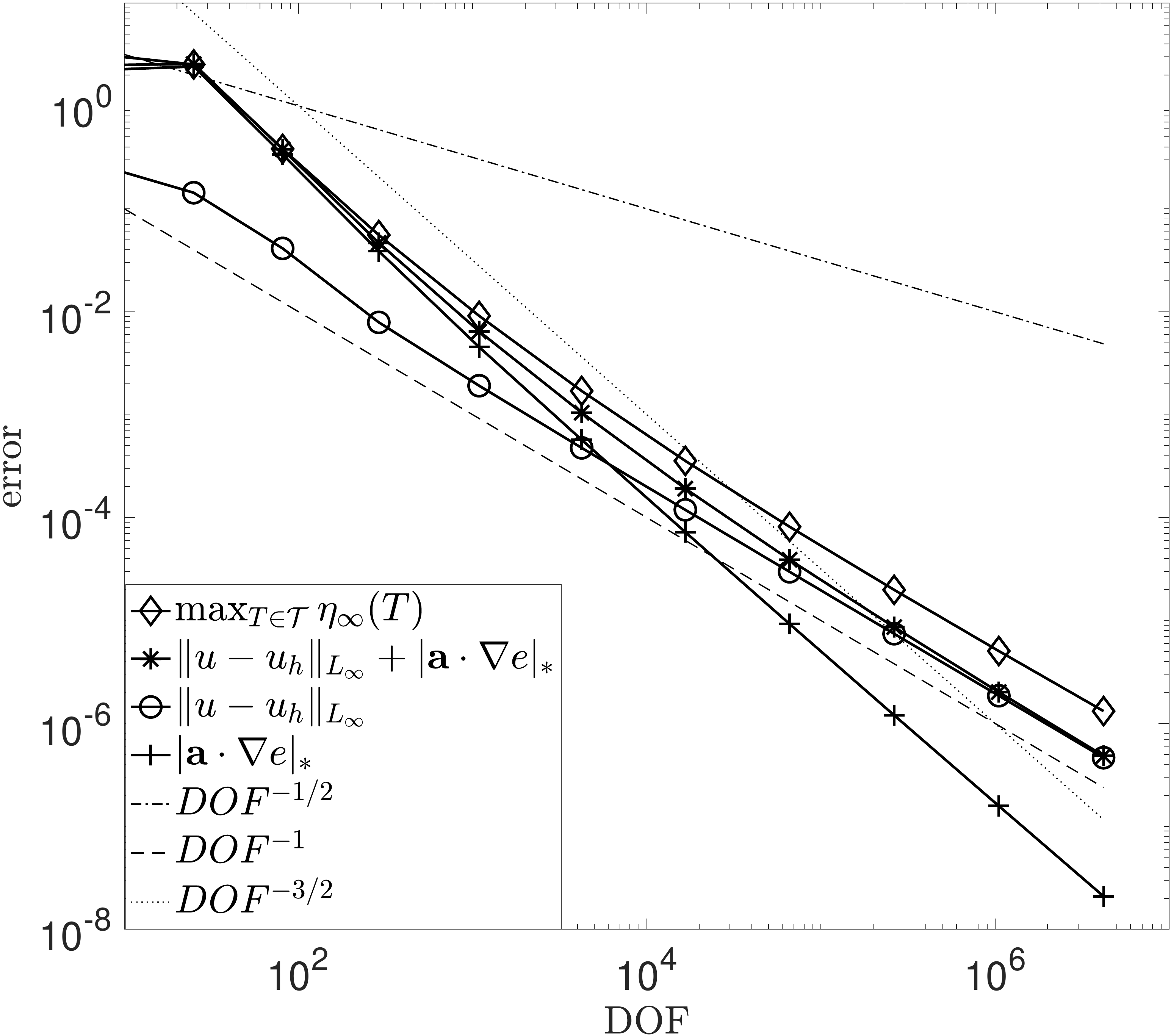}
\includegraphics[width=2.8in]{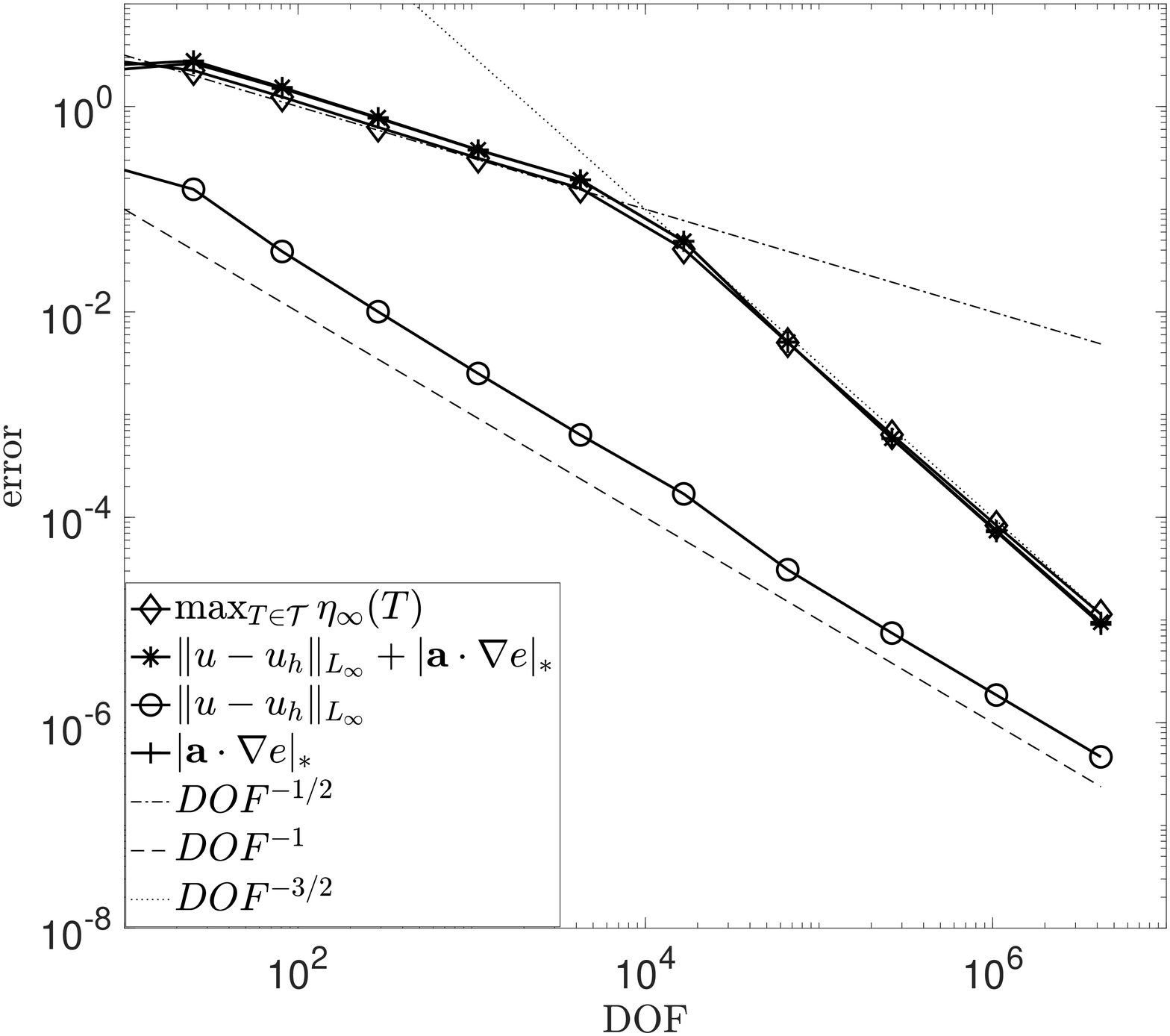}
\caption{{Smooth solution $u_1$} and uniform refinement,
{standard Galerkin method without stabilization,}
  $\epsilon=3\times 10^{-3}$ (left) and $\epsilon=10^{-5}$ (right).}
\label{fig2}
\end{center}
\end{figure}

Understanding these convergence regimes is easiest when considering the $**$-seminorm $|\va\cdot\nabla (u-u_h)|_{\ast \ast} = \sup_{T \in \T} \min  \left (1, 0.0125 \ell_h \frac{h_T^2}{\epsilon} \right ) \|\va \cdot \nabla e\|_{\infty \,; T}$, which closely tracks the $ \ast $ seminorm in this case.  We may expect $\|\va \cdot \nabla (u-u_h)\|_{L_\infty(T)}=O(h)=O(DOF^{-1/2})$.  Initially $\min(1, \ell_h \frac{h_T^2}{\epsilon})=1$, so  $|\va\cdot\nabla (u-u_h)|_{\ast \ast}=O(h)$ also.  When $h_T^2 \ell_h < \sqrt{\epsilon}$, then we have $|\va\cdot\nabla (u-u_h)|_{\ast \ast} \lesssim \frac{h^3 \ell_h}{\epsilon}$, leading to the increased rate of convergence observed in Figure \ref{fig2}.

\begin{remark}
In Figure \ref{fig2} we observe that $\|u-u_h\|_{L_\infty(\Omega)}$ converges with optimal rate $O(DOF^{-1})$ from essentially the first mesh refinement.  This implies that oscillations often associated with unstabilized solution of singularly perturbed problems are not present here.  This was confirmed by viewing plots of the discrete solution.   Lack of instability in the discrete solution may be due to symmetries in the test solution, which in 1D examples has been observed to lead to similar unexpectedly good results for unstabilized methods \cite[Chap.\,4]{Kop93}.  Although numerical stability is uncharacteristically good for this example, it
is nonetheless useful as it allows for clear exposition of the properties of our estimator relative to the target error notion $\|u-u_h\|_{\infty\,; \Omega}$ and seminorm $|\va\cdot\nabla(u-u_h)|_{*}$.
\end{remark}

\subsection{Experiment 2:  {Outflow boundary}  layer}
In this subsection we consider the {outflow boundary} layer solution
$$u_2(x,y)=x(1-x)\left [ y-\frac{e^{-(1-y)/{\epsilon}}-e^{-1/{\epsilon}}}{1-e^{-1/{\epsilon}}}                \right ].
$$
This solution exhibits a strong layer at $y=1$ of width $O(\epsilon)$ and corresponding maximum {solution} gradient size $O(\epsilon^{-1})$.  In this experiment and in all of the displayed adaptive experiments involving layers we employed streamline diffusion stabilization.

In the left plot in Figure \ref{fig3}  we illustrate the advantage of adaptive versus uniform refinement with $\epsilon=10^{-5}$.  Uniform refinement leads to essentially no decrease in the target norm $\|u-u_h\|_{L_\infty(\Omega)}$, but some decrease in the $*$-seminorm and estimator.  Adaptive refinement yields little initial decrease in the maximum error, but optimal $O(DOF^{-1})$ decrease begins with a little over $10^5$ degrees of freedom.  The $*$-seminorm and estimator decrease with order $DOF^{-1}$ for most of the convergence history.

In the right plot of Figure \ref{fig3} we include data oscillation in the plot in order to illustrate that the $|\cdot|_{\ast}$-seminorm is an essential part of the error notion measured by $\max_{T \in \T} \eta_{\infty(T)}$.  In particular, we see that $\max_{T \in \T} \eta_\infty(T) \simeq |\va \cdot \nabla e |_{\ast}  \gg \|u-u_h\|_{\infty \,; \Omega}+\max_{T \in \T_{h}} \alpha_T \|R_h-R_{h,T}\|_{\infty\,; T}$ for much of the convergence history, confirming that the estimator is reliable but not efficient when measuring only the sum of the maximum error and data oscillation.

\setlength{\unitlength}{.8cm}
\begin{figure}[t]
\begin{center}
\includegraphics[width=2.8in]{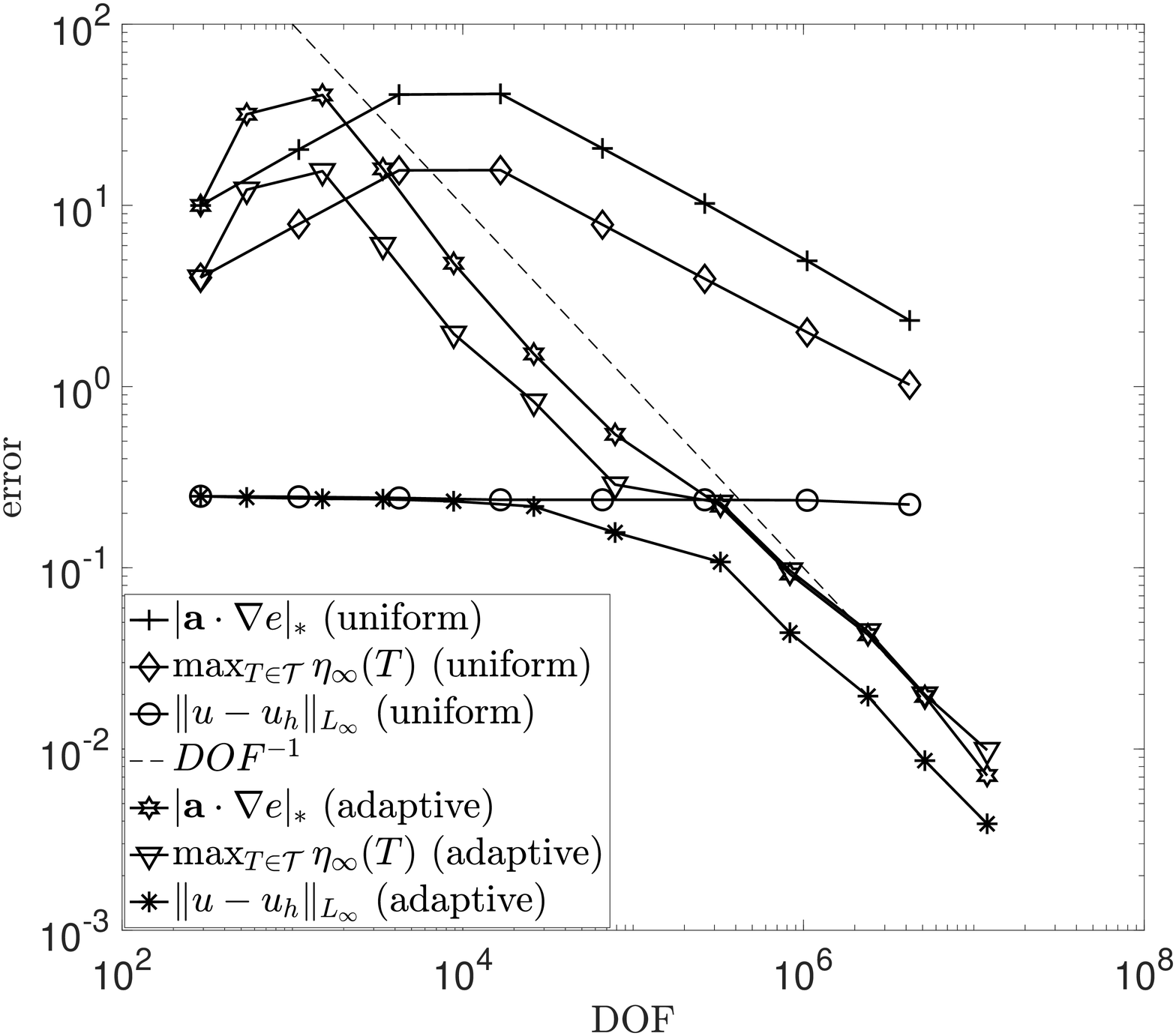}
\includegraphics[width=2.8in]{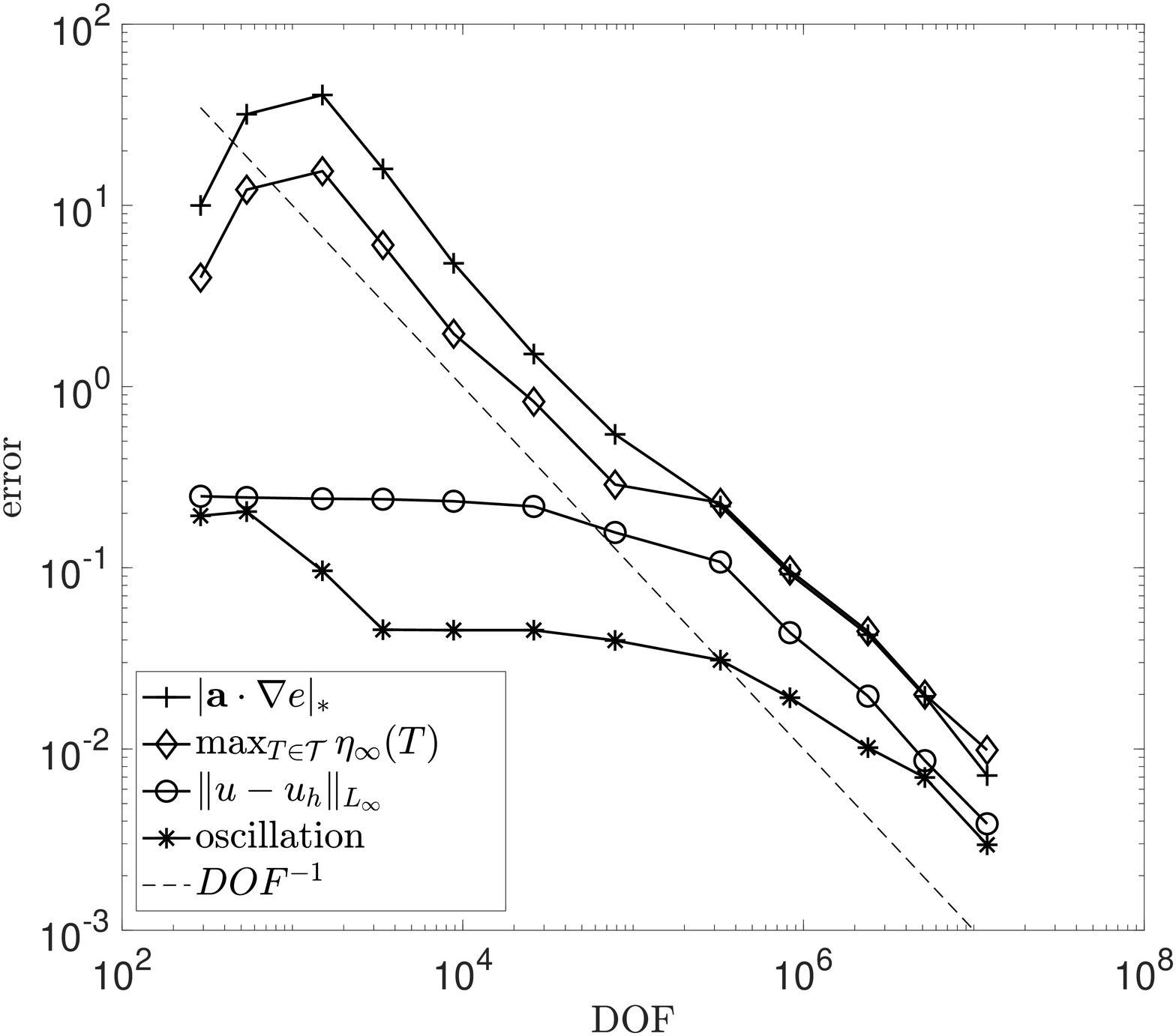}
\caption{{Outflow boundary} layer {solution $u_2$ with}  $\epsilon=10^{-5}$  and streamline diffusion stabilization.  Uniform and adaptive refinement (left), and with oscillation displayed (right).}
\label{fig3}
\end{center}
\end{figure}

\subsection{Experiment 3: {Interior layer,} $|\cdot|_*$ versus $|\cdot|_{**}$ error measure}
Here, in addition to the {outflow boundary} layer problem considered above,
{we also consider the simple interior layer solution \cite{Ern_Steph_2008}}
$$
u_3(x,y)=2x(1-x)y(1-y)\left ( 1-\tanh[ (.5-x)/\sqrt{\epsilon}] \right ).
$$
Note that the interior layer solution possesses milder layer behavior than does the outflow layer.

{We shall now use these two tests---see Figure \ref{fig4}---not only to illustrate the performance of our estimator, but
to also
follow up on the theoretical comparison of
$|\cdot|_*$ versus $|\cdot|_{**}$ in Section~\ref{ssec_norms}.}
Recall the definitions \eqref{star_def} and \eqref{starstar_def} of the $\ast$ and $\ast \ast$-(semi)norms, and that the $\ast$-seminorm accurately reflects estimator behavior but may not be intuitive while the $\ast\ast$ norm has a more concrete form but may overestimate the $\ast$-seminorm.   In some cases of interest the $\ast$ and $\ast \ast$ (semi) norms of the convective error are nonetheless very close in size.  To illustrate this fact and the inequality \eqref{norm_compare} we compare behavior of the outflow boundary layer problem and the simple interior layer solution.   In the left plot in Figure \ref{fig4} we take $\epsilon=10^{-4}$ in the {outflow boundary} layer problem.  This yields $|\va\cdot\nabla(u-u_h)|_{\ast \ast} \sim \epsilon^{-1}$, while $|\va\cdot\nabla(u-u_h)|_\ast$ is much smaller.  We see that the quantity $\max_{T \in \T} \alpha_T \|P^0(\va \cdot \nabla )\|_{\infty \,; T}$ closely tracks $|\va \cdot \nabla e|_{\ast}$, while $\max_{T \in \T} \alpha_T \|\va \cdot \nabla e-P^0 (\va \cdot \nabla e)\|_{\infty \,;T}$ closely tracks $|\va \cdot \nabla e|_{\ast \ast}$; cf. \eqref{norm_compare}.  Also, heavy refinement is required before oscillation in $\nabla(u-u_h)$ is resolved and the $\ast$ and $\ast \ast$ quantities become more or less equivalent.  In the right plot we consider the interior layer problem with $\epsilon=10^{-5}$.  Here the $\ast$ and $\ast \ast$ (semi)norms of the convective error are essentially equivalent throughout the convergence history.  These experiments confirm that while the $\ast \ast$ seminorm lends some intuition to the error behavior, the $\ast$ norm most accurately captures the error dynamics.


We also emphasize that in all of our experiments, the estimator $\max_{T \in \T} \eta_\infty(T)$ in fact closely tracked the total error $\|u-u_h\|_{L_\infty(\Omega)} + |\va\cdot\nabla(u-u_h)|_{\ast}$ as predicted.  Elementwise oscillation of the residual $R_h$ appeared not to dominate the other terms in the total error in all cases.

\setlength{\unitlength}{.8cm}
\begin{figure}[t]
\begin{center}
\includegraphics[width=2.8in]{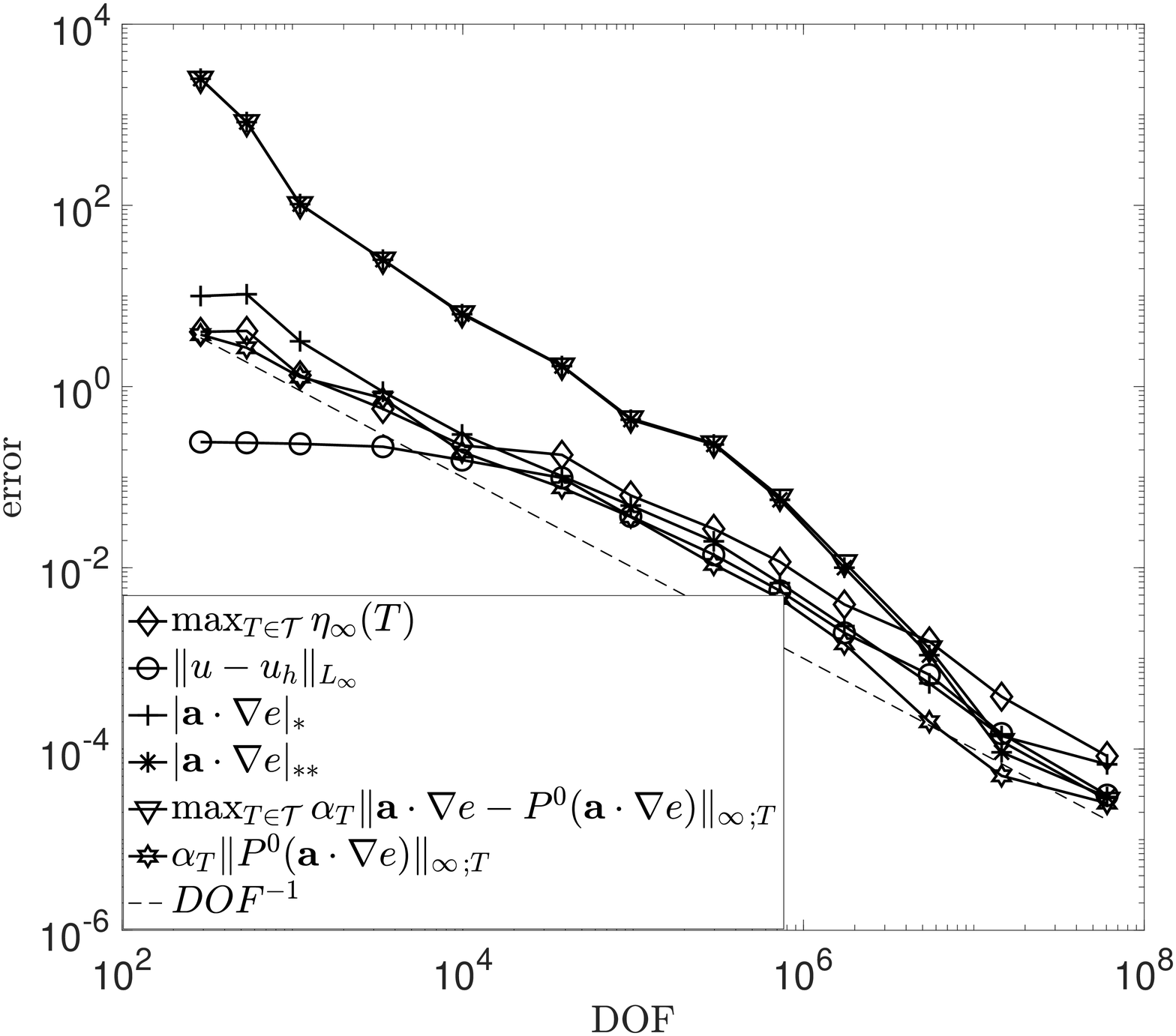}
\includegraphics[width=2.8in]{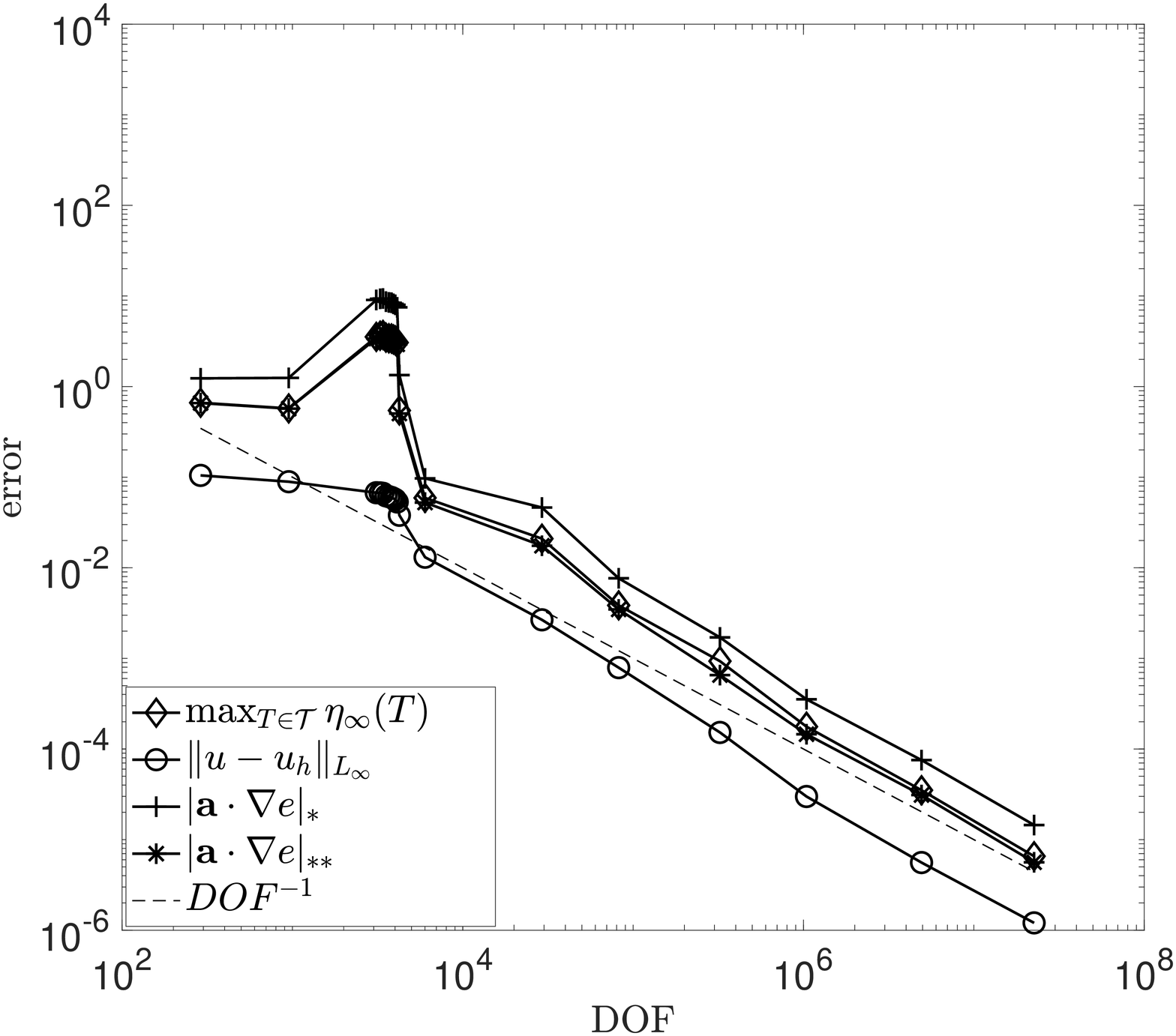}
\caption{{Outflow boundary} layer solution {$u_2$} with $\epsilon=10^{-4}$ (left); interior layer solution
{$u_3$} with $\epsilon=10^{-5}$ (right),
{streamline diffusion stabilization} with  adaptive refinement.}
\label{fig4}
\end{center}
\end{figure}

\section*{Funding}
The first author was partially supported by NSF Grants DMS-1720369 and DMS-2012326.  The third author was partially supported by  Science Foundation Ireland under Grant number 18/CRT/6049.

\bibliographystyle{plain}
\bibliography{data/dk_convection}

\end{document}